\numberwithin{equation}{section}
\numberwithin{figure}{section}
  \theoremstyle{plain}
  \newtheorem*{thm*}{\protect\theoremname}
  \theoremstyle{remark}
  \newtheorem*{rem*}{\protect\remarkname}
\theoremstyle{plain}
\newtheorem{thm}{\protect\theoremname}[section]
  \theoremstyle{definition}
  \newtheorem{defn}[thm]{\protect\definitionname}
  \theoremstyle{remark}
  \newtheorem{rem}[thm]{\protect\remarkname}
  \theoremstyle{plain}
  \newtheorem{prop}[thm]{\protect\propositionname}
  \theoremstyle{plain}
  \newtheorem{lem}[thm]{\protect\lemmaname}
  \providecommand{\definitionname}{Definition}
  \providecommand{\lemmaname}{Lemma}
  \providecommand{\propositionname}{Proposition}
  \providecommand{\remarkname}{Remark}
  \providecommand{\theoremname}{Theorem}
\providecommand{\theoremname}{Theorem}
\begin{document}
\global\long\def\ww#1{\mathbb{#1}}
\global\long\def\pp#1{\mbox{d}#1}
\global\long\def\cp{\ww CP^{2}}

\title{An example of a topologically non-rigid foliation of the complex
projective plane}

\author{Loïc TEYSSIER }

\date{November 2010}
\begin{abstract}
We give here an explicit example of an algebraic family of foliations
of $\cp$ which is topologically trivial but not analytically trivial.
This example underlines the necessity of some assumptions in Y. Ilyashenko's
rigidity theorem.
\end{abstract}

\address{Institut de Recherche Mathématique Avancée (UMR 7501), Université
de Strasbourg et C.N.R.S., 7 rue René Descartes, 67000 Strasbourg,
France}

\dedicatory{$\copyright$\textbf{2010, the author. Published by Oxford University
Press. All rights reserved.}}

\maketitle
\begin{center}
Institut de Recherche Mathématique Avancée\\
Université de Strasbourg (France)\\

\par\end{center}

\section{Introduction and presentation of the result}

The aim of this article is to provide examples of algebraic foliations
of $\cp$ which are not topologically rigid. A foliation on $\cp$
is defined in an affine chart $\ww C^{2}$ by a differential equation
\begin{eqnarray}
P\left(x,y\right)y' & = & Q\left(x,y\right)\label{eq:diff_eq}
\end{eqnarray}
where $P$ and $Q$ are complex polynomials. The space $\mathcal{C}_{d}$
of all such foliations with $P$ and $Q$ of a fixed degree $d$ is
a complex projective space of finite dimension, endowed with the natural
topology. A theorem of Yu. S. Ilyashenko \cite{I} states that except
maybe for a residual set, all foliations which are topologically conjugate
are in fact analytically (thus homographically) conjugate, \emph{i.e.
}the generic foliation is topologically rigid. This result was later
enhanced by various authors. In \cite{S} A. Scherbakov showed that
the set of topologically rigid foliations contains at least the complement
of a real analytic set of $\mathcal{C}_{d}$. An improvement was given
by X. G\'{o}mez-Mont and L. Ort\'{i}z-Bobadilla \cite{GO}, then
by L. Neto, P. Sad and P. Sc\'{a}rdua \cite{NSS}, showing that the
set of all topologically rigid foliation is at least a Zariski-dense
open set of $\mathcal{C}_{d}$. The argument boils down to proving
that non-solvable holonomy representation is the typical behaviour,
then applying Nakai's theorem \cite{N} or using other results on
density of orbits of (pseudo-)groups of local diffeomorphisms. \\

Examples of\emph{ }foliations of $\cp$ which are not topologically
rigid do not abound. First examples of non-rigid foliations of $\cp$
can be deduced from the work of N. Ladis \cite{L}, where the topological
classification of generic homogeneous equations \eqref{eq:diff_eq}
is achieved.

We wish to present rather simple foliations which are not rigid. They
belong to the class of Liouville-integrable foliations, whose holonomy
representation is solvable. The main purpose of this paper is to prove
their non-rigidity ``by hand'', by building explicitely homeomorphisms
between each members of the following family :
\begin{thm*}
Let $\Omega\subset\ww C$ be the domain defined by $\Omega:=\left\{ \alpha\in\ww C\,:\,\left|\alpha\right|<\frac{1}{10}\sqrt{\frac{\pi}{2}}\right\} $.
Each member of the family of linear differential equations in $\ww C^{2}$
: 
\begin{eqnarray}
x^{3}y' & = & y+x^{2}+\alpha x^{3}\,\,\,\,\,,\,\alpha\in\Omega\label{eq:eq_diff}
\end{eqnarray}
induces a foliation of $\cp$ which is not topologically rigid. More
precisely, they all are topologically conjugate to each other whereas
two equations with different $\alpha$'s are not locally analytically
conjugate near $\left(0,0\right)$.
\end{thm*}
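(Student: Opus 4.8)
The plan is to begin by reading off the local structure at the origin. Writing the equation as the vector field $X_\alpha = x^3\partial_x + (y + x^2 + \alpha x^3)\partial_y$, the linear part at $(0,0)$ is $y\partial_y$, so $(0,0)$ is a saddle-node with eigenvalues $0$ and $1$: the strong separatrix is the invariant line $\{x=0\}$, whereas the weak separatrix is only formal, the unique formal solution $\hat y(x) = -x^2 - \alpha x^3 - \cdots$ having factorially divergent coefficients (the recursion is $a_{n+2}=n\,a_n$). A preliminary computation I would carry out is that an affine substitution $y \mapsto y + p_\alpha(x)$ removes the $\alpha$-dependence to every finite order, so that all members of the family are mutually formally equivalent, indeed formally equivalent to the case $\alpha=0$. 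Consequently $\alpha$ can only be an analytic (Stokes) modulus, and this dichotomy --- formally, hence topologically, trivial but analytically nontrivial --- is the backbone of both halves of the statement.

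For the local analytic rigidity near $(0,0)$ I would invoke the Martinet--Ramis classification of saddle-nodes, whereby the analytic class is the formal class together with the sectorial transition (Stokes) data. Solving the linear equation by variation of parameters gives the general solution $y(x) = E(x)\,\bigl(C + I_0(x) + \alpha J(x)\bigr)$ with $E(x)=e^{-1/(2x^2)}$, $I_0(x)=\int \tfrac1s e^{1/(2s^2)}\,ds$ and $J(x)=\int e^{1/(2s^2)}\,ds$, so that a multivalued first integral is $H_\alpha = y\,e^{1/(2x^2)} - I_0(x) - \alpha J(x)$. The Stokes multiplier is then read off from the jump of the sectorial Borel sums of $J$ across the two Stokes rays; after the substitution $s=1/t$ this jump is a Gaussian integral, which is exactly what produces the constant $\sqrt{\pi/2}=\int_0^{\infty}e^{-t^2/2}\,dt$. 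The upshot is that the analytic modulus is an explicit affine function of $\alpha$, injective on $\Omega$ once one quotients by the finitely many normalisation symmetries of the formal model; the radius $\tfrac1{10}\sqrt{\pi/2}$ is precisely the bound keeping distinct $\alpha\in\Omega$ in distinct analytic classes. Since any analytic conjugacy near $(0,0)$ preserves this modulus, two different $\alpha$'s cannot be locally analytically conjugate.

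For the topological triviality, the same first integrals provide an explicit conjugacy away from the singular locus: the fibered biholomorphism $\Phi_{\alpha}(x,y) = \bigl(x,\; y + \alpha\,J(x)E(x)\bigr)$ satisfies $H_\alpha\circ\Phi_\alpha = H_0$ (using $E(x)\,e^{1/(2x^2)}=1$) and therefore carries the leaves of the foliation for parameter $0$ onto those for parameter $\alpha$ over the region $x\neq 0$; between two general parameters one uses the shift $(\alpha'-\alpha)\,J E$. The transverse structure is visibly $\alpha$-independent, the monodromy of the linear equation around $x=0$ being the translation $y\mapsto y + 2\pi i\,E(x)$ for every $\alpha$ (since $J$ has no residue at the origin, only $I_0$ does), so the leaf spaces are identified compatibly. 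I would then extend $\Phi_\alpha$ to a homeomorphism of $\cp$ by analysing the line at infinity, checking that the singular data there depend on $\alpha$ only in a way matchable by an explicit map, and, crucially, by repairing $\Phi_\alpha$ on a neighbourhood of the origin.

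The main obstacle is exactly this last repair. The shift $\alpha\,J(x)E(x)$ does not extend continuously to $x=0$: in the sectors where $\operatorname{Re}(1/x^2)<0$ the factor $E(x)$ explodes while $J(x)$ is unbounded, so the naive holomorphic conjugacy breaks down at the origin --- and this breakdown is the very Stokes phenomenon responsible for the analytic rigidity proved in the second step. The resolution I envisage is to work sectorially, where $J(x)E(x)$ stays bounded, and to glue the sectorial maps by an explicitly controlled, non-holomorphic interpolation on a small polydisc around $(0,0)$, using the smallness $|\alpha|<\tfrac1{10}\sqrt{\pi/2}$ to guarantee that the interpolation remains injective and proper, hence a homeomorphism. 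Verifying continuity of the resulting global map across the separatrix $\{x=0\}$ and at infinity, and checking that it is a genuine homeomorphism of $\cp$ rather than a mere leafwise bijection, is where the bulk of the explicit ``by hand'' work will lie.
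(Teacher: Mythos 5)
Your opening two paragraphs follow the same route as the paper and are sound: the singularity is a saddle-node with one (convergent) separatrix, all members of the family are formally equivalent, and the sectorial solutions $y=E(x)\left(c+\cdots\right)$ yield Stokes data depending affinely and injectively on $\alpha$, so the Martinet--Ramis classification gives the local analytic non-conjugacy. The gap is in the topological half, and it sits exactly where you locate the difficulty. You propose to start from the fibered holomorphic conjugacy $\Phi_{\alpha}\left(x,y\right)=\left(x,\, y+\alpha J\left(x\right)E\left(x\right)\right)$ on $\left\{ x\neq0\right\} $ and to repair it by an interpolation supported ``on a small polydisc around $\left(0,0\right)$''. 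This is provably obstructed. A modification of a leaf conjugacy supported in a small polydisc $\Delta'$ cannot change the induced maps on the sectorial spaces of leaves, because every non-separatrix leaf of the restriction to $\Delta'$ reaches $\partial\Delta'$, where the map is still $\Phi_{\alpha}$. For $\Phi_{\alpha}$ these induced maps are translations $c\mapsto c+a^{\pm}$, and $a^{+}-a^{-}$ is (a nonzero multiple of) $\alpha$ --- the Stokes jump of $\alpha J$ --- so at least one $a^{\pm}\neq0$. Hence $\Phi_{\alpha}$ sends the sectorial weak separatrix of $\mathcal{F}_{0}$, which is topologically distinguished as the unique leaf accumulating at the origin in the saddle part, to a leaf growing like $a^{\pm}e^{-1/\left(2x^{2}\right)}$ there; no repair localized at the origin can yield a homeomorphism. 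The true first step is to produce transverse maps $\psi^{\pm}$ that fix $0$, are asymptotic to the identity at $0$ and $\infty$, and still intertwine $c\mapsto c+1\mp\alpha$ with $c\mapsto c+1$; an elementary check shows no $\ww C$-affine maps satisfy all of these conditions simultaneously unless $\alpha=0$, and the paper must take $\psi^{\pm}=Id$ plus $\alpha$ times a $2$-periodic bump function of $Re\left(c\right)$ --- a non-affine, non-holomorphic choice your proposal does not anticipate (the normalization at $\infty$ being what later permits the extension to $\left\{ x=0\right\} $ and to the line at infinity).

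The second missing idea is that the repair cannot be fibered. Everything in your construction, including the proposed interpolation, has the form $\left(x,y\right)\mapsto\left(X\left(x\right),Y\left(x,y\right)\right)$, and Proposition~\ref{pro:tricky} of the paper shows that any local topological conjugacy of this form between $\mathcal{F}_{\alpha}$ and $\mathcal{F}_{0}$ forces $\alpha=0$: continuity along the strong separatrix forces the two sectorial scaling factors $\lambda^{\pm}\left(\omega\right)$ to coincide, whence $\psi^{+}\left(c+1+\alpha\right)=\psi^{+}\left(c+1-\alpha\right)$ and injectivity kills $\alpha$. The conjugacy must therefore move the base coordinate in a $y$-dependent way near the four directions $\cos\left(\arg x^{2}\right)=0$; this is the role of the paper's sectorial variable $X^{\pm}\left(x,y\right)=x\left(1-2x^{2}\log f^{\pm}\right)^{-1/2}$, and together with the bump-function $\psi^{\pm}$ it carries essentially all of the work (gluing on $\mathcal{V}^{+}\cap\mathcal{V}^{-}$, injectivity, continuity on $\left\{ x=0\right\} $ and at the two singularities at infinity). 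Without these two ingredients the construction you outline cannot be completed.
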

In fact, a part of the second statement is given by P. M. Elizarov's
result \cite{E}, where he describes the local topological classification
of saddle-node equations, and by Martinet-Ramis' one \cite{MR} about
their local analytical classification. The cornerstone of the proof
here is to build an homeomorphism of $\ww C^{2}$ which extends to
the whole of $\cp$, which was not possible using Elizarov's purely
local construction. For technical reasons and the sake of briefness
it was necessary to choose $\Omega$ as given above, though the result
should be valid for all $\alpha\in\ww C\backslash\left\{ \pm1\right\} $.
As a real map, the homeomorphisms introduced here are piece-wise affine
but could be chosen $C^{\infty}$ outside $\left\{ x=0\right\} $
by taking small perturbations. Yet according to a rigidity result
of S. M. Voronin \cite{V} the homeomorphism cannot be made $C^{1}$
in any neighbourhood of this line since otherwise the differential
equations would be locally analytically conjugate.
\begin{rem*}
This family of differential equations is not an unfolding in the sense
of J.-F. Mattéi \cite{M} since if there existed some germ of a holomorphic
function $R$ such that $x^{3}\pp y-\left(y+x^{2}+\alpha x^{3}\right)\pp x+R\left(x,y,\alpha\right)\pp{\alpha}$
be integrable (as a $1$-form) then one would obtain
\begin{eqnarray*}
x^{3}\frac{\partial R}{\partial x}+\left(y+x^{2}+\alpha x^{3}\right)\frac{\partial R}{\partial y} & = & \left(1+3x^{2}\right)R-x^{6}\,.
\end{eqnarray*}
On the one hand this equation admits a unique formal solution \cite{T}.
On the other hand so is the case for the differential equation $x^{3}f'=\left(1+3x^{2}\right)f-x^{6}$.
Taking $R\left(x,y,\alpha\right):=f\left(x\right)$ thus yields the
only possible solution. Unfortunately the latter power series is divergent.
\end{rem*}

\section{\label{sec:Local}Local study of the saddle-node singularity}
\begin{defn}
When we say that two foliations are \emph{locally topologically conjugate}
(or simply topologically conjugate) near $\left(0,0\right)$ we mean
that there exists an open neighbourhood $\Delta$ of $\left(0,0\right)$
and an orientation-preserving homeomorphism $\varphi\,:\,\Delta\to\varphi\left(\Delta\right)$
fixing $\left(0,0\right)$ which sends a (trace on $\Delta$ of a)
leaf of one foliation into a leaf of the other. If moreover $\varphi$
is an analytic map we say that the vector fields are \emph{locally
analytically conjugate}. 
\end{defn}
In the following we will study foliations $\mathcal{F}$ of $\cp$
on subdomains $\Delta$ of $\cp$ and we will implicitly mean that
we consider the restriction of $\mathcal{F}$ to $\Delta$, \emph{i.e.}
the foliation whose leaves are the connected components of the trace
on $\Delta$ of the leaves of $\mathcal{F}$.

\subsection{What is known}

We first apply the linear change of variables $\left(y,\alpha\right)\mapsto\left(-i\pi y,\sqrt{\frac{2}{\pi}}\alpha\right)$
in order to transform the family of equations \eqref{eq:eq_diff}
into 
\begin{eqnarray}
x^{3}y' & = & y-\frac{1}{i\pi}x^{2}-\frac{\alpha}{i\sqrt{2\pi}}x^{3}\,.\label{eq:eq_diff_alpha}
\end{eqnarray}
This change of variables is performed to simplify the upcoming computations.
Let us denote by $\omega_{\alpha}$ the differential $1$-form representing
\eqref{eq:eq_diff_alpha} which, in the affine chart $\ww C^{2}=\left\{ \left(x,y\right)\right\} $,
can be written as 
\begin{eqnarray}
\omega_{\alpha}\left(x,y\right) & := & \left(y-\frac{1}{i\pi}x^{2}-\frac{\alpha}{i\sqrt{2\pi}}x^{3}\right)\pp x-x^{3}\pp y\,.\label{eq:diff_form}
\end{eqnarray}
 Such a differential form is integrable and induces a foliation on
$\cp$, which we denote by $\mathcal{F}_{\alpha}$, of saddle-node
type at $\left(0,0\right)$, having exactly one separatrix passing
through this point (namely $\left\{ x=0\right\} $). Notice that all
leaves of the foliation are transverse to the fibers of the natural
projection
\begin{eqnarray*}
\Pi\,:\, & \left(x,y\right) & \mapsto x
\end{eqnarray*}
except for the separatrices $\left\{ x=0\right\} \cup\left\{ x=\infty\right\} $.

Let us recall two classical results which our argument is partly based
upon.
\begin{thm}
\emph{(Elizarov, \cite{E}) }Let $E$ be the space of all saddle-node
foliations given by differential forms $\left(y+R\left(x,y\right)\right)\pp x-x^{3}\pp y$,
where $R$ is a germ of a holomorphic function at $\left(0,0\right)$
with $R\left(0,0\right)=0$ and $\frac{\partial R}{\partial y}\left(0,0\right)=0$.
This space splits into $E_{1}$ and $E_{2}$ according to whether
a given foliation has one or two separatrices through $\left(0,0\right)$.
\begin{enumerate}
\item The quotient $E_{1}/_{top}$ of local topological equivalence classes
has cardinality $2$. 
\item Two foliations in $E_{2}$ are locally topologically conjugate if,
and only if, so are their ``weak'' holonomies, \emph{i.e. }the holonomies
computed on a transversal $\Pi^{-1}\left(x_{0}\right)$ by lifting
through $\Pi$ a generator of the fundamental group of the second
separatrix.
\end{enumerate}
\end{thm}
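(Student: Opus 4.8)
The plan is to reduce the local analysis of each member of $E$ to the behaviour near its two organizing objects — the strong separatrix $\{x=0\}$ and the (formal) central manifold — via sectorial normalization, and then to read off the topological type from holonomy and gluing data. First I would invoke the formal normal form: every element of $E$ is formally conjugate to the model $x^{3}\frac{\partial}{\partial x}+y\left(1+\lambda x^{2}\right)\frac{\partial}{\partial y}$, where $\lambda\in\ww C$ is the formal residue and, the coefficient of $\frac{\partial}{\partial y}$ being normalized to $-x^{3}$, the Poincar\'e rank is fixed at $p=2$. By the Hukuhara--Kimura--Matuda sectorial normalization this conjugacy is realized by genuine biholomorphisms on a covering of the punctured disk $\{0<\left|x\right|<\varepsilon\}$ by $2p=4$ sectors of opening exceeding $\pi/p=\pi/2$, the transition maps being the Martinet--Ramis analytic modulus. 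On alternating sectors the model is of \emph{node} type (nearby leaves accumulate on $\{y=0\}$) and of \emph{saddle} type (nearby leaves escape); this dichotomy, together with the parabolic holonomy $h$ of $\{x=0\}$ (tangent to the identity with leading term of order $p+1=3$, hence carrying a $2p$-petal flower) and the sectorial pieces of $\{y=0\}$, carries all the local information.

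For part (2) suppose both foliations lie in $E_{2}$, so the central manifold converges and yields a genuine weak separatrix $\Sigma$. For necessity, a topological conjugacy sends the separatrix set to the separatrix set; since the two separatrices have \emph{distinct} local topological types — only $\{x=0\}$ is surrounded by the $2p$-sector flower of the parabolic holonomy — the conjugacy preserves each one, and restricting to a transversal $\Pi^{-1}\left(x_{0}\right)$ it conjugates the weak holonomies. For sufficiency, a topological conjugacy of the weak holonomies is first suspended to a topological conjugacy of the two foliations on a neighbourhood of $\Sigma\setminus\{0\}$, using that there the foliation is transverse to $\Pi$ and hence the suspension of its weak holonomy; one then extends this homeomorphism across the node and saddle sectors up to $\{x=0\}$, matching the sectorial homeomorphisms along the Stokes lines while respecting the (topologically standard) parabolic $h$.

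For part (1) the central manifold diverges, the sectorial copies of $\{y=0\}$ fail to glue, and a single separatrix survives; here the topological type is carried entirely by discrete data. The key point is that the continuous parameters — the residue $\lambda$ and the analytic transition cocycle — are topologically irrelevant: one straightens them by homeomorphisms, replacing the analytic model on each sector by a piecewise-standard topological one and deforming the Stokes maps to purely combinatorial identifications of the node sectors. What remains is the combinatorics of how the $p$ nodal sheets are permuted and linked upon running once around $\{x=0\}$, together with the fixed multiplicity $p+1$ of $h$; for the rigid structure $-x^{3}\pp y$ of $E$ this residual invariant takes exactly two values, giving $E_{1}/_{top}$ of cardinality $2$.

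The main obstacle is precisely the topological triviality of the analytic modulus asserted in the third step: showing that the continuous Martinet--Ramis parameters can be absorbed by homeomorphisms. I expect this to require Fatou-type coordinates on the node and saddle sectors in which the transition maps become affine, a deformation of those maps to a normal form relative to the strong holonomy, and a verification that the resulting sectorial homeomorphisms glue into a single homeomorphism that stays continuous up to and across $\{x=0\}$. Controlling the $2p$ matchings along the Stokes directions and the behaviour at the singular point $\left(0,0\right)$, where all sectors meet, is where the argument is genuinely delicate and where the quantitative restriction on $\Omega$ exploited later in the paper originates.
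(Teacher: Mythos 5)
You should first be aware that the paper does not prove this statement at all: it is quoted as a classical theorem of Elizarov, with the citation \cite{E} standing in for the proof, and the article only uses it (together with Martinet--Ramis) as a black box to conclude that the $\mathcal{F}_{\alpha}$ are locally topologically conjugate. There is therefore no proof in the paper to compare your attempt against, and it has to be judged on its own merits.

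On its own merits it is a reasonable outline of the standard strategy (formal normal form, Hukuhara--Kimura--Matuda sectorial normalization, reduction to holonomy and Stokes data), but it is not a proof, and the gap sits exactly where you locate it yourself. The entire content of Elizarov's theorem is the step you defer with ``I expect this to require'': showing that the continuous Martinet--Ramis parameters can be absorbed by sectorial homeomorphisms that match along the four Stokes directions and remain continuous up to $\left\{ x=0\right\} $ and at the origin. This is genuinely delicate; the paper's own Proposition~\ref{pro:tricky} shows that even for the simple family \eqref{eq:eq_diff_alpha} no topological conjugacy can preserve the fibration $\Pi$, so the ``suspend the weak holonomy, then extend across the sectors'' argument you sketch for part (2) cannot go through as stated --- one is forced to distort the $x$-variable near the Stokes directions, which is precisely what the constructions of Sections~\ref{sec:Local} and~\ref{sec:Extending} are designed to do. For part (1) you assert, without derivation, that the residual combinatorial invariant ``takes exactly two values''; the actual invariant (recorded in the paper's remark following the Martinet--Ramis theorem) is the pair $\left(\varepsilon_{0},\varepsilon_{1}\right)\in\left\{ 0,1\right\} ^{2}\backslash\left\{ \left(0,0\right)\right\} $ recording which of the two Stokes translations $\tau_{j}$ vanish, with $\left(1,0\right)$ and $\left(0,1\right)$ identified, and your sketch neither identifies this invariant nor shows that nothing finer survives topological conjugacy. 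Finally, the closing claim that the quantitative restriction on $\Omega$ originates in this local theorem is off target: that restriction comes from the explicit global construction of the conjugating homeomorphism later in the paper, not from Elizarov's result.
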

In fact the differential form \eqref{eq:diff_form} has such a simple
form that it is integrable by quadrature, so its invariant of topological
classification can be computed explicitly in terms of $\alpha$ (see
the end of \emph{\cite{MR}} for a similar computation, or \cite{T}
for a more general one). It then turns out that when $\alpha^{2}\neq1$
all foliations $\mathcal{F}_{\alpha}$, which belong to $E_{1}$,
are mutually topologically conjugate. Besides $\mathcal{F}_{\pm1}$
belong to the other equivalence class.
\begin{thm}
\emph{(Martinet-Ramis, \cite{MR}) }The quotient $E/_{ana}$ of local
analytic equivalence classes is in one-to-one correspondence with
the space $\ww C\times\left(\ww C\times c\ww C\left\{ c\right\} \right)^{2}/_{\sim}$.
The invariant of Martinet-Ramis is thus a $5$-tuple $\mathcal{M}:=\left(\mu,\tau_{0},\varphi_{0},\tau_{1},\varphi_{1}\right)$
where $\mu:=\frac{\partial^{2}R}{\partial x\partial y}\left(0,0\right)$
is the formal invariant, $\tau_{j}$ are scalars and $\varphi_{j}$
are germs of a vanishing holomorphic function at $0$, \emph{modulo}
the equivalence relation (with evident notations) : $\mathcal{M}\sim\tilde{\mathcal{M}}$
if and only if $\mu=\tilde{\mu}$, $\tau_{j}=\lambda\tilde{\tau}_{j+k}$,
$\varphi_{j}\left(c\right)=\tilde{\varphi}_{j+k}\left(\lambda c\right)$
for some $\lambda\in\ww C_{\neq0}$ not depending on $j\in\ww Z/2$
and for some $k\in\ww Z/2$. \end{thm}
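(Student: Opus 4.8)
The plan is to establish the stated bijection by the classical two-step method: a \emph{formal} normalisation that extracts $\mu$, followed by a \emph{sectorial} (Stokes) analysis that extracts the two pairs $(\tau_{j},\varphi_{j})$. First I would treat the formal side. Solving the homological equation order by order, one shows that every form in $E$ is formally conjugate to the normal form $y\left(1+\mu x^{k}\right)\pp x-x^{k+1}\pp y$, where $k$ is fixed by the order of vanishing along $\left\{x=0\right\}$ and $\mu=\frac{\partial^{2}R}{\partial x\partial y}(0,0)$ is the only coefficient that the resonances do not permit to be removed. This produces the formal normalising transformation $\hat H$ and identifies $\mu$ as the formal invariant, the first entry of $\mathcal M$.

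The analytic core is to promote $\hat H$ to genuine holomorphic normalisations on a covering of a punctured neighbourhood of $\left\{x=0\right\}$ by sectors adapted to the Stokes directions $\mathrm{Re}\,x^{-k}=0$. On each sector $V_{j}$ one builds a biholomorphism $H_{j}$ conjugating $\mathcal F$ to the normal form and asymptotic to $\hat H$; this is where $1$-summability of $\hat H$ (via Borel--Laplace, or the majorant and fixed-point estimates of Hukuhara--Kimura--Matuda) enters, and it is the step I expect to be the main obstacle, since $\hat H$ is in general divergent and the whole analytic difficulty is concentrated here. On each overlap the comparison $C_{j}:=H_{j+1}\circ H_{j}^{-1}$ is an automorphism of the normal form; using the explicit first integral $y\,x^{-\mu}\exp\!\left(\frac{1}{k x^{k}}\right)$ one checks that $C_{j}$ is fibred over $x$ and, read in the canonical coordinate $c$ on the space of leaves, becomes a germ of a holomorphic map fixing $0$. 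Recording its leading scalar $\tau_{j}$ and its higher-order part $\varphi_{j}\in c\ww C\left\{c\right\}$ for $j\in\ww Z/2$ furnishes the remaining entries of $\mathcal M$.

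It then remains to see that $\mathcal M$ is a \emph{complete} invariant modulo $\sim$. For well-definedness and injectivity I would observe that the construction carries exactly two ambiguities: a rescaling $c\mapsto\lambda c$ of the leaf coordinate, which acts by $\tau_{j}\mapsto\lambda\tau_{j}$ and $\varphi_{j}(c)\mapsto\varphi_{j}(\lambda c)$, and a relabelling of the sectors, which shifts the index by $k\in\ww Z/2$; this is precisely the relation $\sim$. Conversely, if two saddle-nodes have moduli agreeing up to $\sim$, one normalises the ambiguities so that their transition data coincide, whereupon the two families of sectorial conjugacies glue on the overlaps into a single map holomorphic off $\left\{x=0\right\}$ and bounded, which extends across the line by Riemann's removable-singularity theorem into a genuine analytic conjugacy.

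Finally, for surjectivity I would run the construction in reverse: prescribe automorphisms $C_{j}$ of the normal form carrying the given data, glue copies of the normal form over the sectors along the $C_{j}$ into an abstract foliated surface, and invoke a Birkhoff--Malgrange gluing theorem (equivalently, a Cauchy--Heine reconstruction of a transformation with prescribed Stokes jumps) to endow it with a convergent holomorphic structure, thereby exhibiting an $R$ realising the prescribed $5$-uple. As already flagged, the genuinely hard points are analytic --- the summability underlying the sectorial normalisation and the convergence in the realisation --- whereas the verification of the equivalence relation $\sim$ is essentially bookkeeping.
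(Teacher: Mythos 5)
The paper does not prove this statement: it is quoted as Martinet--Ramis's classification theorem from \cite{MR}, so there is no in-paper argument to measure your proposal against. Judged on its own, your outline reproduces the standard architecture of the Martinet--Ramis proof (formal normal form extracting $\mu$, sectorial normalisation, transition cocycle read in the leaf coordinate, realisation by gluing), and the two ambiguities you identify (the rescaling $c\mapsto\lambda c$ and the index shift) are exactly what the relation $\sim$ encodes. But it remains a sketch: the two steps you yourself flag as hard --- the existence of sectorial normalisations asymptotic to the divergent $\hat H$ (Hukuhara--Kimura--Matuda, or $1$-summability of $\hat H$) and the realisation of an arbitrary cocycle (Cauchy--Heine, or the non-abelian gluing of Martinet--Ramis) --- are precisely where the content of the theorem lies, and you invoke them rather than prove them.

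One structural point needs correcting before the details could be written. For the space $E$ of the paper one has $x^{3}=x^{k+1}$ with $k=2$, hence \emph{four} sectors and four overlaps, which alternate in nature. On the two ``node-type'' overlaps every leaf adheres to the singular point, the sectorial leaf space is the whole line $\ww C$, and the transition map is the globally defined translation $c\mapsto c+\tau_{j}$; on the two ``saddle-type'' overlaps it is the germ $c\mapsto c\exp\left(i\pi\mu+\varphi_{j}\left(c\right)\right)$ fixing $0$ (this is spelled out in the paper's subsequent Remark). Your description, in which a single comparison $C_{j}$ yields both ``its leading scalar $\tau_{j}$ and its higher-order part $\varphi_{j}$'', conflates the two types of overlap and would produce twice too many invariants; the pairing of $\tau_{j}$ with $\varphi_{j}$ inside the $5$-uple is by index only, each datum coming from a different overlap. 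Relatedly, your claim that every $C_{j}$ ``becomes a germ of a holomorphic map fixing $0$'' fails on the node-type overlaps, where $C_{j}$ is an affine automorphism of $\ww C$, not a germ at the origin.
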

\begin{rem}
The space $E_{2}$ coincides with the space of foliations such that
$\tau_{0}=\tau_{1}=0$.
\end{rem}
The same computations as above yields that $\alpha$ is an analytic
invariant, as we will see in the following section. More precisely
one can choose $\mathcal{M}$ as follows :
\begin{eqnarray*}
\tau_{0} & := & 1+\alpha\\
\tau_{1} & := & 1-\alpha\\
\varphi_{j} & := & 0\,.
\end{eqnarray*}
Hence $\mathcal{F}_{\alpha}$ and $\mathcal{F}_{\beta}$ are always
topologically conjugate (under the hypothesis $\alpha,\beta\notin\left\{ -1,1\right\} $)
whereas they are analytically conjugate if, and only if, $\alpha=\beta$.
\begin{rem}
~
\begin{enumerate}
\item These invariants are not the ``genuine'' Martinet-Ramis invariants,
which are more conventionally seen as gluing maps in the sectorial
space of leaves, meaning the diffeomorphisms :
\begin{eqnarray*}
\psi_{j}^{\infty}\,:\, c & \mapsto & c+\tau_{j}\\
\psi_{j}^{0}\,:\, c & \mapsto & c\exp\left(i\pi\mu+\varphi_{j}\left(c\right)\right)\,.
\end{eqnarray*}
In the case where $\chi\in E_{2}$, \emph{i.e. }$\psi_{j}^{\infty}=Id$,
its weak holonomy is analytically conjugate to $\psi_{0}^{0}\circ\psi_{1}^{0}$,
which is a map tangent to $e^{2i\pi\mu}Id$.
\item Elizarov's topological moduli space of $E_{1}$ is the set of all
pairs $\left(\varepsilon_{0},\varepsilon_{1}\right)\in\left\{ 0,1\right\} ^{2}\backslash\left\{ \left(0,0\right)\right\} $
such that $\varepsilon_{j}=0$ if, and only if, $\tau_{j}=0$ where
$\left(1,0\right)$ and $\left(0,1\right)$ are identified.
\end{enumerate}
\end{rem}
\bigskip{}

We propose here to build explicitly a local topological conjugacy
between $\mathcal{F}_{\alpha}$ and $\mathcal{F}_{0}$ when $\left|\alpha\right|<\frac{1}{10}$.
In fact one could achieve the same construction for any value of $\alpha$
but for the sake of concision we only retain this case. Before doing
so we begin with describing the setting for any value of $\alpha$.

\subsection{The sectorial decomposition and induced homemorphisms in the spaces
of leaves}

We split $\ww C^{2}$ into three parts : 
\begin{eqnarray*}
\ww C^{2} & = & \mathcal{V}^{+}\cup\mathcal{V}^{-}\cup\left\{ x=0\right\} 
\end{eqnarray*}
where the sectors $\mathcal{V}^{\pm}$ are, as usual, defined by 
\begin{eqnarray*}
\mathcal{V}^{\pm} & := & \left\{ \left(x,y\right)\,:\,\left|\arg x\mp\frac{\pi}{2}\right|<\frac{3\pi}{4}\right\} \,.
\end{eqnarray*}
 We denote $\mathcal{F}_{\alpha}$ the foliation induced on $\cp$
by $\omega_{\mathcal{\alpha}}$ and define $\mathcal{F}_{\alpha}^{\pm}$
as the restriction of $\mathcal{F}_{\alpha}$ to $\mathcal{V}^{\pm}$\emph{.}
We let $y_{\alpha,c}^{\pm}$ be the general solution of the differential
equation $\omega_{\alpha}=0$ for $c\in\ww C$ :
\begin{eqnarray}
y_{\alpha,c}^{\pm}\,:\, x\in\Pi\left(\mathcal{V}^{\pm}\right) & \mapsto & \exp\left(-\frac{1}{2x^{2}}\right)\left(c-\int_{\pm0i}^{x}\left(\frac{1}{i\pi}+\frac{\alpha}{i\sqrt{2\pi}}z\right)\exp\left(\frac{1}{2z^{2}}\right)\frac{\pp z}{z}\right)\label{eq:sol_gene}
\end{eqnarray}
which are holomorphic functions. The integration here is done over
a path linking $x$ to $0$ in $\Pi\left(\mathcal{V}^{\pm}\right)$
and tangent to the half-line $\pm i\ww R_{\geq0}$ at $0$. Notice
that the intersection $\mathcal{V}^{+}\cap\mathcal{V}^{-}$ is included
in the node-part $\left\{ Re\left(x^{-2}\right)>\varepsilon>0\right\} $of
the saddle-node singularity of $\mathcal{F}_{\alpha}$, meaning that
any leaf of $\mathcal{F}_{\alpha}^{\pm}$, or of its restriction to
any polydisc $\Delta$ centered at $\left(0,0\right)$, accumulates
on $\left(0,0\right)$ over these sectors. On the contrary only one
leaf accumulates on $\left(0,0\right)$ in the saddle-part $\left\{ Re\left(x^{-2}\right)<-\varepsilon<0\right\} $.
It is the leaf corresponding to $y_{\alpha,0}^{\pm}$ and we will
call it \emph{the sectorial weak separatrix}. Martinet-Ramis invariants
measure how going from one sector $\mathcal{V}^{\pm}$ to the other
changes the value of $c$ while remaining on the same global leaf
of $\mathcal{F}_{\alpha}$. In the special case we are considering
they simply consist in the Stokes coefficients of the linear differential
equation $\omega_{\alpha}=0$. One can easily check that 
\begin{eqnarray*}
\left(\forall Re\left(x\right)<0\right)\, y_{\alpha,c}^{+}\left(x\right) & = & y_{\alpha,c+1+\alpha}^{-}\left(x\right)\\
\left(\forall Re\left(x\right)>0\right)\, y_{\alpha,c}^{-}\left(x\right) & = & y_{\alpha,c+1-\alpha}^{+}\left(x\right)
\end{eqnarray*}
 so that 
\begin{eqnarray*}
\tau_{0} & = & 1+\alpha\\
\tau_{1} & = & 1-\alpha\,.
\end{eqnarray*}
Indeed the value of the difference $y_{\alpha,c}^{+}-y_{\alpha,c}^{-}$
can be obtained through Hankel's integral representation of $\frac{1}{\Gamma}$
:
\begin{eqnarray*}
\int_{\gamma_{j}}z^{a}\exp\left(\frac{1}{2z^{2}}\right)\frac{\pp z}{z^{3}} & = & -\frac{2i\pi}{\Gamma\left(a/2\right)}\left(\frac{1}{2}\right)^{a/2}\left(-1\right)^{aj}
\end{eqnarray*}
where $\gamma_{j}$ is a circle tangent at $0$ to $i\ww R$ centered
at $\left(-1\right)^{j}$. 

We can assume without loss of generality that, up to changing slightly
the aperture of the source and target sectors, $\varphi\left(\mathcal{V}^{\pm}\cap\Delta\right)\subset\mathcal{V}^{\pm}$.
Hence, following the same argument as before, any homeomorphism $\varphi$
conjugating $\mathcal{F}_{\alpha}$ and $\mathcal{F}_{0}$ on some
polydisc $\Delta$ induces (unique) homeomorphisms $\psi^{\pm}$ from
the sectorial spaces of leaves of $\mathcal{F}_{\alpha}^{\pm}$ to
the sectorial space of leaves of $\mathcal{F}_{0}^{\pm}$. Since $\varphi$
must send sectorial weak separatrices of $\mathcal{F}_{\alpha}$ onto
those of $\mathcal{F}_{0}$ the homeomorphisms $\psi^{\pm}$ shall
fix $0$ and we derive 
\begin{eqnarray*}
\psi^{\pm}\,:\,\ww C & \to & \ww C\\
c & \mapsto & \psi^{\pm}\left(c\right)\\
0 & \mapsto & 0
\end{eqnarray*}
 such that 
\begin{eqnarray*}
\varphi\left(\left\{ y=y_{\alpha,c}^{\pm}\left(x\right)\right\} \right) & \subset & \left\{ y=y_{0,\psi^{\pm}\left(c\right)}^{\pm}\left(x\right)\right\} 
\end{eqnarray*}
 and $\psi^{\pm}$ conjugate the actions of $c\mapsto c+\tau_{j}$.
See figure \ref{fig:leaves_homeo}. They will be called \emph{transverse
homeomorphisms} in the sequel as they completely determine the change
in the transverse structure of the foliations.

On the converse our aim in the rest of Section~\ref{sec:Local} is
to build special transverse homeomorphims conjugating the Stokes translations
in order that they be realized in the $\left(x,y\right)$-space by
a local homeomorphism $\varphi$. We will later extend it to the whole
$\cp$ in Section~\ref{sec:Extending}.

\begin{figure}
\includegraphics[height=5cm]{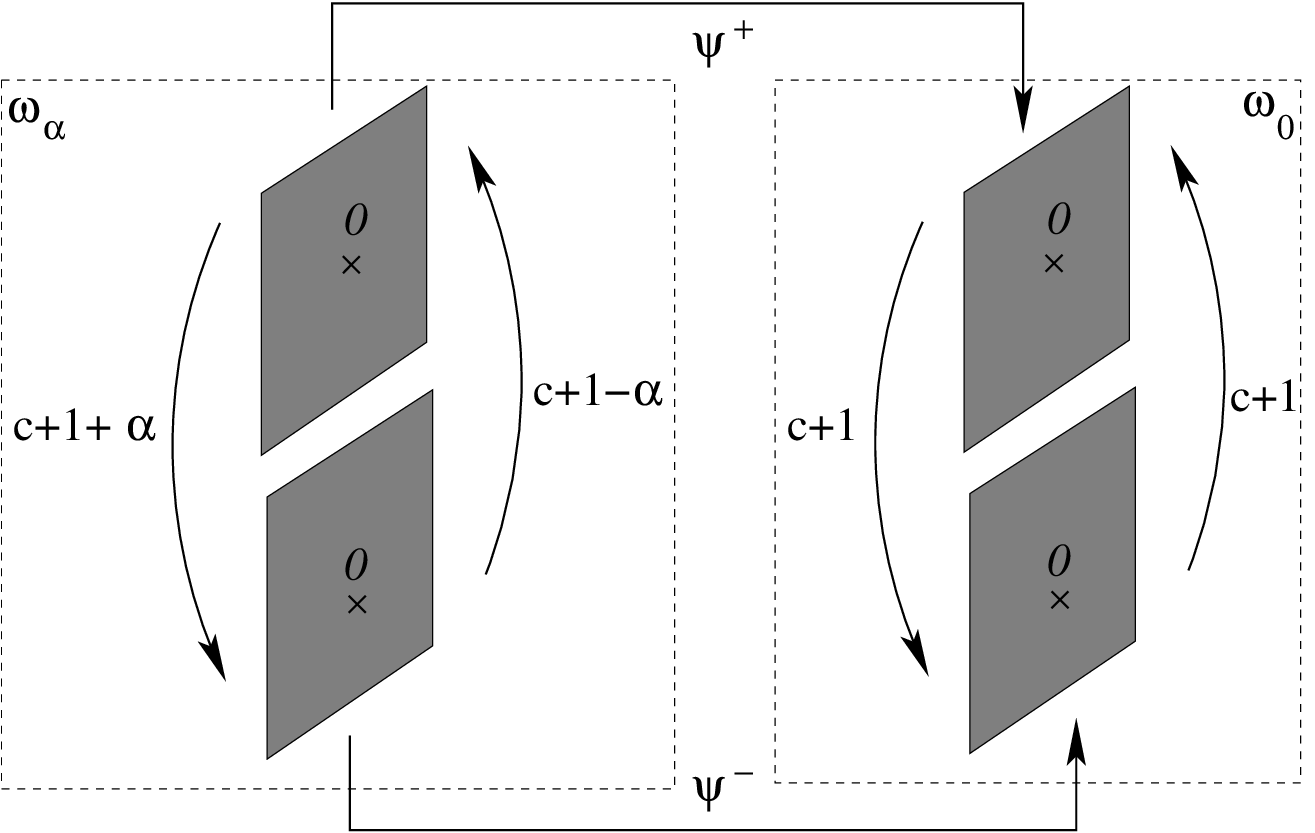}

\caption{\label{fig:leaves_homeo}The induced homeomorphisms $\psi^{+}$ and
$\psi^{-}$ between spaces of leaves}

\end{figure}

\subsection{The construction on $\ww C^{2}$}

The strategy to build such a $\varphi$ consists in the following
three steps.
\begin{enumerate}
\item Finding two transverse homeomorphisms $\psi^{\pm}$ such that
\begin{eqnarray}
\begin{cases}
\psi^{+}\left(c+1-\alpha\right) & =\psi^{-}\left(c\right)+1\\
\psi^{-}\left(c+1+\alpha\right) & =\psi^{+}\left(c\right)+1\\
\psi^{+}\left(0\right)=\psi^{-}\left(0\right) & =0\\
\lim_{c\to0,\infty}\frac{\psi^{\pm}\left(c\right)}{c} & =1
\end{cases} &  & \,.\label{eq:conj_leaves}
\end{eqnarray}
Notice the three first conditions are necessarily satisfied by any
pair of induced homeomorphisms. Besides the first two relations imply
that $\psi^{\pm}=Id+\eta^{\pm}$ where $\eta^{\pm}$ is $2$-periodic. 
\item Finding a lift $\varphi^{\pm}$ of $\psi^{\pm}$ in the ambient space
$\mathcal{V}^{\pm}\cap\overline{\ww D}\times\ww C$ such that $\varphi^{+}=\varphi^{-}$
in $\mathcal{V}^{+}\cap\mathcal{V}^{-}$ (thus defining a homeomorphism
$\varphi$ on $\overline{\ww D}_{\neq0}\times\ww C$).
\item Ensuring that $\varphi$ extends continuously to $\left\{ x=0\right\} $.
For this we need the last condition in the above system.
\end{enumerate}
To underline that fulfilling these three conditions is tricky we first
prove the
\begin{prop}
\label{pro:tricky}Assume that $\varphi$ is a local topological conjugacy
between $\mathcal{F}_{\alpha}$ and $\mathcal{F}_{0}$ such that $\varphi$
preserves globally the fibers of $\Pi$. Then $\alpha=0$.
\end{prop}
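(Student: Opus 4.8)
The plan is to argue entirely at the level of the transverse homeomorphisms. A fiber-preserving conjugacy must have the form $\varphi\left(x,y\right)=\left(\sigma\left(x\right),g\left(x,y\right)\right)$ with $\sigma\left(0\right)=0$, since it fixes the strong separatrix $\left\{x=0\right\}$ and has to exchange the node-part and the saddle-part of $\mathcal{F}_{\alpha}$ with those of $\mathcal{F}_{0}$; after shrinking apertures slightly one may assume $\sigma$ maps $\mathcal{V}^{\pm}$ into $\mathcal{V}^{\pm}$. Exactly as in the discussion preceding \eqref{eq:conj_leaves}, such a $\varphi$ induces transverse homeomorphisms $\psi^{\pm}$ characterized by $g\left(x,y_{\alpha,c}^{\pm}\left(x\right)\right)=y_{0,\psi^{\pm}\left(c\right)}^{\pm}\left(\sigma\left(x\right)\right)$, which satisfy the three necessary relations of \eqref{eq:conj_leaves} together with the normalization $\lim_{c\to0,\infty}\psi^{\pm}\left(c\right)/c=1$ (tangency to the identity at $0$ and at $\infty$ encoding continuity of $\varphi$ at $\left\{x=0\right\}$ from the saddle and the node directions, respectively).

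First I would isolate the extra constraint coming from fiber preservation. On a single node fiber $\left\{x=x_{0}\right\}\subset\mathcal{V}^{+}\cap\mathcal{V}^{-}$ the \emph{one} fiber map $g\left(x_{0},\cdot\right)$ represents both $\psi^{+}$ and $\psi^{-}$, read through the two leaf-coordinates $c^{\pm}=y\,\exp\left(\tfrac{1}{2x_{0}^{2}}\right)+I_{\alpha}^{\pm}\left(x_{0}\right)$ coming from \eqref{eq:sol_gene}. These coordinates differ by the Stokes constant of $\mathcal{F}_{\alpha}$, namely $\tau_{1}=1-\alpha$ on $\left\{Re\,x>0\right\}$ and $-\tau_{0}=-\left(1+\alpha\right)$ on $\left\{Re\,x<0\right\}$, whereas on the target the corresponding constants are $1$ and $-1$. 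Writing $g\left(x_{0},\cdot\right)$ in both charts therefore reproduces exactly the two coupling relations of \eqref{eq:conj_leaves}; eliminating $\psi^{-}$ gives $\psi^{+}\left(c+2\right)=\psi^{+}\left(c\right)+2$, so $\psi^{\pm}=Id+\eta^{\pm}$ with $\eta^{\pm}$ of period $2$, and $\psi^{\pm}$ must conjugate the source Stokes translations $c\mapsto c+\tau_{j}$ to the target ones $c\mapsto c+1$.

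The decisive step — and the one I expect to be the main obstacle — is to show that fiber preservation forces these conjugacies to respect the \emph{affine} (hence analytic) transverse structure, not merely the topological one. The point I would exploit is that $c_{\alpha}^{\pm}\left(x,y\right)=y\,\exp\left(\tfrac{1}{2x^{2}}\right)+I_{\alpha}^{\pm}\left(x\right)$ is a genuine holomorphic first integral of $\mathcal{F}_{\alpha}^{\pm}$, transported by a \emph{fiber-preserving} $\varphi$ into the first integral $c_{0}^{\pm}$ of $\mathcal{F}_{0}^{\pm}$ through $c_{0}^{\pm}\circ\varphi=\psi^{\pm}\circ c_{\alpha}^{\pm}$. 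Using this, together with the normalization at $0$ and $\infty$, I would try to upgrade each $\psi^{\pm}$ to an affine (indeed holomorphic) map of $\ww C$: a $2$-periodic perturbation $\eta^{\pm}$ that stays sublinear in every complex direction at infinity and tangent to $0$ at the origin is, once holomorphy is in hand, constant by a Liouville argument, so $\psi^{\pm}=Id$. Feeding $\psi^{\pm}=Id$ back into the coupling relations then equates the source and target Stokes constants, i.e. $1\pm\alpha=1$, whence $\alpha=0$; equivalently one reads off that fiber preservation forces the Martinet--Ramis data $\left(\tau_{0},\tau_{1}\right)=\left(1+\alpha,1-\alpha\right)$ to be proportional to $\left(1,1\right)$ by a single scalar $\lambda$, which again gives $\alpha=0$.

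The hard part is precisely this regularity upgrade: a priori $\varphi$ is only a homeomorphism, and extracting the holomorphic (or at least affine) nature of $\psi^{\pm}$ from the purely topological hypothesis is delicate, because the normalization alone permits non-analytic $2$-periodic wiggles. This is where the special geometry of the saddle-node must intervene, and where I would, if a direct argument proves insufficient, fall back on Voronin's rigidity \cite{V} — which forbids a $C^{1}$ conjugacy near $\left\{x=0\right\}$ — to convert the fiber-compatibility into analyticity and then conclude through the fact that $\alpha$ is a complete analytic invariant (Martinet--Ramis).
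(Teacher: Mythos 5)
There is a genuine gap at what you yourself identify as the decisive step. Up to that point you have only extracted from the fiber map $g\left(x_{0},\cdot\right)$ the two coupling relations and the $2$-periodicity of $\psi^{\pm}-Id$; but these are exactly the ``three first conditions'' of \eqref{eq:conj_leaves}, which are satisfied by \emph{every} induced pair of transverse homeomorphisms, fiber-preserving or not --- and the paper later constructs, for $\alpha\neq0$, explicit pairs $\left(\psi^{+},\psi^{-}\right)$ satisfying them. So nothing specific to fiber preservation has been used by the end of your second paragraph, and the whole burden falls on the regularity upgrade. That upgrade is not proved: $\varphi$ is only a homeomorphism, so Voronin's theorem (which requires a $C^{1}$ conjugacy near $\left\{ x=0\right\} $) cannot be invoked, and no direct argument is offered for the holomorphy or affinity of $\psi^{\pm}$. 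Moreover the target of the upgrade, $\psi^{\pm}=Id$, is stronger than what is true or needed: a fiber-preserving conjugacy has no reason to induce identity transverse maps (linear $\psi^{\pm}$ are perfectly compatible with continuity at $\left\{ x=0\right\} $, which also shows that your normalization $\lim_{c\to0,\infty}\psi^{\pm}\left(c\right)/c=1$ is a choice made in the later construction, not a consequence of the hypotheses).

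What fiber preservation actually buys --- and what the paper's proof uses --- is a relation between $\psi^{+}$ and $\psi^{-}$ obtained on the \emph{single} fiber $\left\{ x=0\right\} $: taking sequences $x_{n}\to0$ in $\mathcal{V}^{\pm}$ with $\exp\left(-\frac{1}{2x_{n}^{2}}\right)=\omega$ fixed, the limit $\varphi\left(0,y\right)$ must be expressible both as $\left(0,\lambda^{+}\left(\omega\right)\psi^{+}\left(y\omega^{-1}\right)\right)$ and as $\left(0,\lambda^{-}\left(\omega\right)\psi^{-}\left(y\omega^{-1}\right)\right)$; the $2$-periodicity of $\psi^{\pm}-Id$ then forces $\lambda^{+}\left(\omega\right)=\lambda^{-}\left(\omega\right)$, hence $\psi^{+}=\psi^{-}$ on the relevant set, and combining this identity with the two coupling relations gives $\psi^{+}\left(c+1+\alpha\right)=\psi^{+}\left(c+1-\alpha\right)$, so injectivity alone yields $\alpha=0$, with no regularity of $\psi^{\pm}$ ever needed. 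If you want to repair your argument, this is the missing mechanism: the constraint coming from fiber preservation is not that each $\psi^{\pm}$ is individually rigid, but that the two of them must coincide, which is incompatible with their conjugating two distinct pairs of Stokes translations.
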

The meaning of this statement is that, unlike the analytical setting
(see \cite{MR}), topological conjugacies between non-analytically
conjugate saddle-node foliations cannot be chosen of the form $\left(x,y\right)\mapsto\left(x,Y\left(x,y\right)\right)$,
nor even of the form $\left(x,y\right)\mapsto\left(X\left(x\right),Y\left(x,y\right)\right)$,
which seriously complicates matters as we will see.
\begin{proof}
Let $\Delta$ be a polydisc on which $\varphi$ is realized. For any
$\omega\in\ww C_{\neq0}$ there exists a sequence $\left(x_{n}\right)_{n}\subset\mathcal{V}^{+}$
such that $\exp\left(-\frac{1}{2x_{n}^{2}}\right)=\omega$ and $\left(x_{n}\right)_{n}$
converges towards $0$ ; let $y$ be given in order that $\left(x_{n},y\right)_{n}\subset\Delta$.
By assumption $\varphi$ takes the form $\varphi\left(x_{n},y\right)=\left(X\left(x_{n}\right),Y\left(x_{n},y\right)\right)$.
Since
\begin{eqnarray}
y_{\alpha,c}^{+}\left(x\right)-y_{\alpha,0}^{+}\left(x\right) & = & c\exp\left(-\frac{1}{2x^{2}}\right)\label{eq:eq_lin_alpha}
\end{eqnarray}
does not depend on $\alpha$ we deduce, by setting $c:=y\omega^{-1}$,
\begin{eqnarray*}
Y\left(x_{n},y\right) & = & y_{0,0}^{+}\left(X\left(x_{n}\right)\right)+\frac{\psi^{+}\left(y\omega^{-1}\right)}{y}\left(y-y_{\alpha,0}^{+}\left(x_{n}\right)\right)\exp\left(-\frac{1}{2X\left(x_{n}\right)^{2}}\right)\,.
\end{eqnarray*}
In particular the sequence $\left(\exp\left(-\frac{1}{2}X\left(x_{n}\right)^{-2}\right)\right)_{n}$
converges towards some complex number $\lambda^{+}\left(\omega\right)\in\ww C$.
Therefore 
\begin{eqnarray}
\varphi\left(0,y\right) & = & \left(0,\lambda^{+}\left(\omega\right)\psi^{+}\left(y\omega^{-1}\right)\right)\label{eq:prolong_lin_to_sep}
\end{eqnarray}
and $\lambda^{+}\left(\omega\right)\neq0$. Obviously the same construction
can be carried out on $\mathcal{V}^{-}$, giving rise to a non-zero
vanishing function $\omega\mapsto\lambda^{-}\left(\omega\right)$
such that for all $y$ one has the same relation $\varphi\left(0,y\right)=\left(0,\lambda^{-}\left(\omega\right)\psi^{-}\left(y\omega^{-1}\right)\right)$.
Hence $\lambda^{+}\left(\omega\right)\psi^{+}\left(y\omega^{-1}\right)=\lambda^{-}\left(\omega\right)\psi^{-}\left(y\omega^{-1}\right)$
for every $\left(0,y\right)\in\Delta$. Now we fix $\omega$ small
enough in order that $\left|\omega\right|^{-1}\Delta$ contain the
points $\left(0,y\right)\in\Delta$ and $\left(0,y+2\right)$ for
at least one value of $y$. Using the $2$-periodicity of $\psi^{\pm}-Id$
we derive 
\begin{eqnarray*}
\lambda^{+}\left(\omega\right)\psi^{+}\left(y+2\right) & = & \lambda^{-}\left(\omega\right)\psi^{-}\left(y+2\right)+2\left(\lambda^{+}\left(\omega\right)-\lambda^{-}\left(\omega\right)\right)\,,
\end{eqnarray*}
meaning $\lambda^{+}\left(\omega\right)=\lambda^{-}\left(\omega\right)$.
Therefore the first two conditions of \eqref{eq:conj_leaves} yields
$\psi^{+}\left(c+1+\alpha\right)=\psi^{+}\left(c+1-\alpha\right)$
for all $c\in\ww C$. Since $\psi^{+}$ is one-to-one the only possibility
is $\alpha=0$.
\end{proof}

\subsection{The transverse homeomorphisms}

To go back to our purpose we first find admissible $\psi^{\pm}$.

\begin{figure}
\includegraphics[width=6cm]{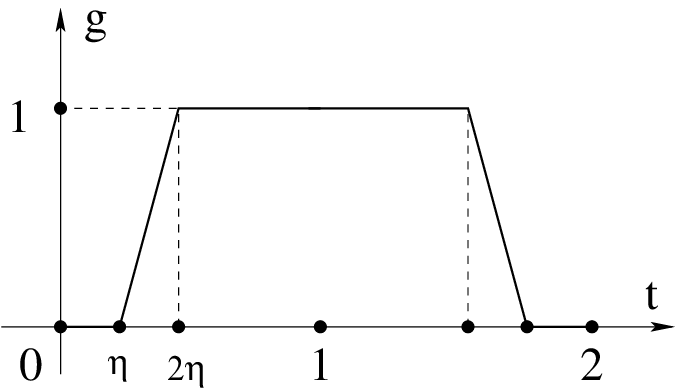}

\caption{The function $g$.}
\end{figure}

\begin{prop}
Let $\left|\alpha\right|<\frac{1}{10}$ and $\eta:=\left(1-\left|Re\left(\alpha\right)\right|\right)/3$.
We define $g$ as to be the simplest piece-wise real affine map $g$
on $\left[0,2\right]$ such that $g|_{\left[0,\eta\right]}:=0$, $g|_{\left[2\eta,2-2\eta\right]}:=1$
and $g|_{\left[2-\eta,2\right]}:=0$. We still denote by $g$ its
$2$-periodic extension to $\ww R$. Then the following functions
\begin{eqnarray*}
c\mapsto\psi^{+}\left(c\right) & := & c+\alpha g\left(Re\left(c\right)\right)\\
c\mapsto\psi^{-}\left(c\right) & := & c-\alpha+\alpha g\left(1+Re\left(c-\alpha\right)\right)
\end{eqnarray*}
form a pair of homeomorphisms solution to \eqref{eq:conj_leaves}.
Moreover for all $c\in\ww C$ : 
\begin{eqnarray*}
\left|\frac{\psi^{\pm}\left(c\right)}{c}-1\right|< & \min\left(\frac{1}{6},\frac{1}{10\left|Re\left(c\right)\right|}\right) & .
\end{eqnarray*}
\end{prop}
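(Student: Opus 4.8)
The plan is to establish the two assertions of the proposition separately: first that $\psi^{\pm}$ solve the system \eqref{eq:conj_leaves}, and then the quantitative estimate. I expect the three functional conditions to fall out by direct substitution once the right simplification is spotted, the homeomorphism property to reduce to a one-variable slope computation, and the genuine work to lie in the supremum estimate hidden behind the constant $\frac16$.

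For the first two relations of \eqref{eq:conj_leaves} I would simply plug in. Writing $u:=Re\left(c\right)$ and $a:=Re\left(\alpha\right)$, the decisive observation is that the two real arguments $Re\left(c+1-\alpha\right)$ and $1+Re\left(c-\alpha\right)$ both equal $1+u-a$, so after cancelling the affine parts the identity $\psi^{+}\left(c+1-\alpha\right)=\psi^{-}\left(c\right)+1$ becomes the tautology $g\left(1+u-a\right)=g\left(1+u-a\right)$. Likewise $\psi^{-}\left(c+1+\alpha\right)=\psi^{+}\left(c\right)+1$ reduces, once the $c$- and $\alpha$-terms match, to $g\left(2+u\right)=g\left(u\right)$, which is exactly the $2$-periodicity of $g$. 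The normalisation $\psi^{+}\left(0\right)=0$ holds because $g\equiv0$ on $\left[0,\eta\right]$, and $\psi^{-}\left(0\right)=0$ because $1-a\in\left(\frac{9}{10},\frac{11}{10}\right)\subset\left[2\eta,2-2\eta\right]$, using $\eta\in\left(\frac{3}{10},\frac13\right]$ (forced by $\left|a\right|\leq\left|\alpha\right|<\frac1{10}$), so that $g\left(1-a\right)=1$.

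For the homeomorphism property I would pass to real coordinates $c=u+iv$, $\alpha=a+ib$. Then $\psi^{+}$ reads $\left(u,v\right)\mapsto\left(u+a\,g\left(u\right),\,v+b\,g\left(u\right)\right)$ and $\psi^{-}$ has the same triangular shape with $g\left(u\right)$ replaced by $g\left(1+u-a\right)$. Such a map is a homeomorphism of $\ww C\cong\ww R^{2}$ as soon as its first coordinate $u\mapsto u+a\,g\left(u\right)$ is an increasing homeomorphism of $\ww R$, for then $u$ is recovered from the image and $v$ follows affinely. Since $g$ is piecewise affine with slopes in $\left\{ 0,\pm1/\eta\right\}$, this amounts to $1-\left|a\right|/\eta>0$; and $\left|a\right|/\eta=3\left|a\right|/\left(1-\left|a\right|\right)<\frac13$ by the choice of $\eta$, so the map is strictly increasing. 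The two limit conditions then come for free: near $c=0$ one has $\psi^{\pm}\left(c\right)=c$ (because $g\equiv0$, resp. $g\equiv1$, on a neighbourhood of the relevant argument), while as $c\to\infty$ the perturbation $\alpha\,g\left(\cdots\right)$, resp. $-\alpha+\alpha\,g\left(\cdots\right)$, stays bounded and is divided by $\left|c\right|\to\infty$.

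The estimate is where the constants get used and is the step I expect to be most delicate. In both cases the deviation takes the form $\psi^{\pm}\left(c\right)/c-1=\alpha\,h_{\pm}\left(u\right)/c$ with $h_{+}\left(u\right)=g\left(u\right)$ and $h_{-}\left(u\right)=g\left(1+u-a\right)-1$, each of modulus at most $1$ and vanishing near $u=0$. Dividing by $\left|c\right|\geq\left|u\right|$ at once gives $\left|\psi^{\pm}\left(c\right)/c-1\right|\leq\left|\alpha\right|/\left|u\right|<1/\left(10\left|Re\left(c\right)\right|\right)$. For the bound $\frac16$ I would instead write $\left|\psi^{\pm}\left(c\right)/c-1\right|\leq\left|\alpha\right|\,\left|h_{\pm}\left(u\right)\right|/\left|u\right|$ and compute the supremum of $\left|h_{\pm}\left(u\right)\right|/\left|u\right|$ over $\ww R$ from the explicit trapezoidal profile: since $h_{\pm}$ is zero until the first ramp past its zero set, the supremum is attained at the top of that ramp. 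For $h_{+}$ this top sits at $u=2\eta$ and gives $\sup\left|h_{+}\right|/\left|u\right|=1/\left(2\eta\right)$; for $h_{-}$ the first ramp on the dominant side yields $1/\left(1-\eta-\left|a\right|\right)$, which by the calibration identity $1-\eta-\left|a\right|=2\eta$ is again $1/\left(2\eta\right)$. Hence $\left|\psi^{\pm}\left(c\right)/c-1\right|\leq\left|\alpha\right|/\left(2\eta\right)<\frac16$, using $\left|\alpha\right|<\frac1{10}$ and $\eta>\frac{3}{10}$. The main obstacle is thus the book-keeping of the piecewise structure of $h_{-}$, namely locating the zero set $\left\{ h_{-}=0\right\}$ relative to the origin and identifying the ramp nearest $0$, so as to verify that both suprema collapse to the common value $1/\left(2\eta\right)$; everything else is either a tautology or a one-line slope or boundedness argument.
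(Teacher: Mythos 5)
Your proof is correct and follows essentially the same route as the paper: the decisive estimate in both is $\sup_{t}\left|g\left(t\right)/t\right|\leq\frac{1}{2\eta}<\frac{5}{3}$ combined with $\left|\alpha\right|<\frac{1}{10}$, with the calibration $1-\eta-\left|Re\left(\alpha\right)\right|=2\eta$ making the $\psi^{-}$ case collapse to the same bound. The only cosmetic difference is in the bijectivity argument, where you use the triangular structure in real coordinates while the paper notes that $\psi^{\pm}-Id$ is Lipschitz with constant strictly less than $1$; both are one-line observations.
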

\begin{proof}
The fact that $\left(\psi^{+},\psi^{-}\right)$ is solution to the
system \eqref{eq:conj_leaves} is clear enough. Besides $\left|\alpha\right|<\frac{1}{10}$
so 
\begin{eqnarray*}
\sup_{t\in\left[-1,1\right]}\left|\frac{g\left(t\right)}{t}\right| & \leq & \frac{1}{2\eta}<\frac{5}{3}\,,
\end{eqnarray*}
while for $\left|t\right|\geq1$ one has $g\left(t\right)\leq1$.
Hence
\begin{eqnarray*}
\left|\frac{\psi^{+}\left(c\right)}{c}-1\right| & < & \left|\alpha\right|\min\left(\frac{5}{3},\frac{1}{\left|Re\left(c\right)\right|}\right)\,.
\end{eqnarray*}
The same kind of estimate arises for the other map, where if $\left|Re\left(c\right)\right|\in\left[0,1\right]$
:
\begin{eqnarray*}
\left|\frac{\psi^{-}\left(c\right)}{c}-1\right| & = & \left|\frac{\alpha}{c}\left(g\left(1-Re\left(\alpha\right)+Re\left(c\right)\right)-1\right)\right|\\
 & < & \frac{5}{3}\left|\alpha\right|<\frac{1}{6}
\end{eqnarray*}
since $1+Re\left(\alpha\right)-\eta>\frac{3}{5}$. To end the proof
we only need to notice that $\psi^{\pm}-Id$ is $\frac{1}{6}$-Lipschitz,
thus one-to-one and onto $\ww C$.
\end{proof}

\subsection{The homeomorphism $\varphi$}

We look now for $\varphi$ on $\ww{\overline{D}}\times\ww C$ as we'll
extend it to the entire projective plane in the next section. As was
noticed in Proposition~\ref{pro:tricky} we cannot preserve globally
the $x$-variable in the four directions $\left\{ \cos\arg x^{2}=0\right\} $.
Hence we build a new (sectorial) variable $X^{\pm}\left(x,y\right)$
which will mostly be the identity except in the neighbourhood of those
forbidden directions. We define
\begin{eqnarray*}
X^{\pm}\left(x,y\right) & := & x\left(1-2x^{2}\log f^{\pm}\left(x,c^{\pm}\left(x,y\right)\right)\right)^{-1/2}
\end{eqnarray*}
where $f^{\pm}$ is a functional parameter which will be adjusted
in the sequel to suit our needs, and where $c^{\pm}$ is the function
holomorphic on $\mathcal{V}^{\pm}$ defined by the relation
\begin{eqnarray*}
y_{\alpha,c^{\pm}\left(x,y\right)}\left(x\right) & = & y\,.
\end{eqnarray*}
For any fixed $x$ the partial function $y\mapsto c^{\pm}\left(x,y\right)$
is a diffeomorphism of $\ww C$.

In order to send a leave of $\mathcal{F}_{\alpha}^{\pm}$ into a leave
of $\mathcal{F}_{0}^{\pm}$ while changing the transverse structure
we take the new $y$-variable as being the following
\begin{eqnarray*}
Y^{\pm}\left(x,y\right) & := & y_{0,0}^{\pm}\left(X^{\pm}\left(x,y\right)\right)+f^{\pm}\left(x,c^{\pm}\left(x,y\right)\right)\psi^{\pm}\left(c^{\pm}\left(x,y\right)\right)\exp\left(-\frac{1}{2x^{2}}\right)
\end{eqnarray*}
and set 
\begin{eqnarray}
\varphi^{\pm}\left(x,y\right) & := & \left(X^{\pm}\left(x,y\right),Y^{\pm}\left(x,y\right)\right)\,.\label{eq:phi}
\end{eqnarray}
If we want that $\varphi^{+}$ and $\varphi^{-}$ glue on each connected
component of $\mathcal{V}^{+}\cap\mathcal{V}^{-}$ we must require
that for all $c\in\ww C$ : 
\begin{eqnarray*}
\begin{cases}
f^{+}\left(x,c\right)=f^{-}\left(x,c+1+\alpha\right) & \,\,\,,\,\forall Re\left(x\right)<0\\
f^{-}\left(x,c\right)=f^{+}\left(x,c+1-\alpha\right) & \,\,\,,\,\forall Re\left(x\right)>0
\end{cases} &  & .
\end{eqnarray*}
If we moreover wish that $\varphi^{\pm}$ extend to $Id$ on $\left\{ x=0\right\} $
the parameters $f^{\pm}$ must satisfy the additional condition that
for all $y_{0}$ : 
\begin{eqnarray*}
\lim_{\left(x,y\right)\to\left(0,y_{0}\right)}f^{\pm}\left(x,c^{\pm}\left(x,y\right)\right)\frac{\psi^{\pm}\left(c^{\pm}\left(x,y\right)\right)}{c^{\pm}\left(x,y\right)} & = & 1\,.
\end{eqnarray*}
 The following lemma is straightforward to prove :
\begin{lem}
\label{lem:local_homeo}Let $\left(\chi_{1},\chi_{2}\right)$ be the
simplest non-negative affine partition of unity of the circle $\ww S^{1}\simeq\ww R_{/2\pi\ww Z}$
such that $\chi_{1}\left(\frac{\pi}{4}+k\frac{\pi}{2}\right)=1$ for
any $k\in\ww Z_{/4\ww Z}$, $\chi_{2}\left(\theta\right)=1$ whenever
$\left|\cos\left(2\theta\right)\right|>\delta$ for some small, fixed
$\delta>0$ and $\chi_{1}+\chi_{2}=1$. Define the functions
\begin{eqnarray*}
f^{\pm}\left(x,c\right) & := & \chi_{1}\left(\arg x\right)\frac{c}{\psi^{\pm}\left(c\right)}+\chi_{2}\left(\arg x\right)\,.
\end{eqnarray*}

These functions satisfy the following properties :
\begin{enumerate}
\item $\left|f^{\pm}\left(x,c\right)-1\right|<\frac{1}{5}$ and $\left|2x^{2}\log f^{\pm}\left(x,c\right)\right|<\frac{3}{5}\left|x\right|^{2}$
whenever $\left|x\right|\leq1$,
\item $f^{\pm}$ is continuous on $\ww{\overline{D}}\times\ww C$,
\item $f^{\pm}$ is constant to $1$ on $\mathcal{V}^{+}\cap\mathcal{V}^{-}$, 
\item $\lim_{\left(x,y\right)\to\left(0,y_{0}\right)}f^{\pm}\left(x,c^{\pm}\left(x,y\right)\right)\frac{\psi^{\pm}\left(c^{\pm}\left(x,y\right)\right)}{c^{\pm}\left(x,y\right)}=1$
for all $y_{0}\in\ww C$,
\item for all fixed $x$ the maps $c\mapsto f^{\pm}\left(x,c\right)\psi^{\pm}\left(c\right)$
are homeomorphisms of the complex line.
\end{enumerate}
\end{lem}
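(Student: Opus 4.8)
The plan is to reduce all five items to the single fact that $\chi_1+\chi_2=1$, which exhibits $f^\pm$ as an affine interpolation between $c/\psi^\pm(c)$ and $1$. I would first record, for $c\neq0$, the three identities
\begin{align*}
f^\pm(x,c)-1 & = \chi_1(\arg x)\Big(\tfrac{c}{\psi^\pm(c)}-1\Big),\\
f^\pm(x,c)\,\tfrac{\psi^\pm(c)}{c}-1 & = \chi_2(\arg x)\Big(\tfrac{\psi^\pm(c)}{c}-1\Big),\\
f^\pm(x,c)\,\psi^\pm(c) & = \chi_1(\arg x)\,c+\chi_2(\arg x)\,\psi^\pm(c)=c+\chi_2(\arg x)\big(\psi^\pm(c)-c\big).
\end{align*}
Near $c=0$ one has $\psi^\pm=\mathrm{Id}$ (there $g\equiv0$), so $c/\psi^\pm(c)$ extends continuously by $1$ and all three identities persist through the origin. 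Items (1), (3), (5) will be read off the first and third identities, whereas (4) is exactly the vanishing of the right-hand side of the second one and is the delicate point.

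For (1), the preceding Proposition gives $|\psi^\pm(c)/c-1|<\tfrac16$, hence $|c/\psi^\pm(c)-1|<(\tfrac16)/(\tfrac56)=\tfrac15$; as $0\le\chi_1\le1$, the first identity yields $|f^\pm-1|<\tfrac15$. This keeps $f^\pm$ in the domain of the principal logarithm and gives $|\log f^\pm|\le-\log(1-\tfrac15)=\log\tfrac54<\tfrac{3}{10}$, whence $|2x^2\log f^\pm|<\tfrac35|x|^2$ for $|x|\le1$. Continuity (2) on $\overline{\ww D}_{\neq0}\times\ww C$ is immediate since $\chi_j\circ\arg$ is continuous away from $x=0$ and $c\mapsto c/\psi^\pm(c)$ is continuous on $\ww C$ (extended by $1$ at the origin); what happens as $x\to0$ is meaningful only for the composite $f^\pm(x,c^\pm(x,y))$ and is precisely (4). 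For (3), once $\delta$ and the apertures are matched so that $\overline{\mathcal V^+\cap\mathcal V^-}\subset\{|\cos(2\arg x)|>\delta\}$ (a harmless adjustment, as allowed), one has $\chi_1\equiv0$ and $\chi_2\equiv1$ there, so $f^\pm\equiv1$. Finally (5) follows from the third identity: for fixed $x$ the map $c\mapsto c+\chi_2(\arg x)\big(\psi^\pm(c)-c\big)$ is the identity plus $\chi_2(\arg x)$ times the map $\psi^\pm-\mathrm{Id}$, which is Lipschitz with constant $<1$ by the preceding proof; since $\chi_2(\arg x)\in[0,1]$ the perturbation has Lipschitz constant $<1$, so the map is a homeomorphism of $\ww C$.

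The hard part is (4). By the second identity it amounts to
\[
\lim_{(x,y)\to(0,y_0)}\chi_2(\arg x)\Big(\tfrac{\psi^\pm(c^\pm(x,y))}{c^\pm(x,y)}-1\Big)=0,
\]
a competition between the two factors. From \eqref{eq:eq_lin_alpha} (and its $-$ analogue) I would use the closed form $c^\pm(x,y)=\big(y-y_{\alpha,0}^\pm(x)\big)\exp\big(\tfrac{1}{2x^2}\big)$ and argue by the limiting value $\theta_*$ of $\arg x$ along a subsequence, using $y_{\alpha,0}^\pm(x)\to0$: if $\cos(2\theta_*)=0$ (one of the four anti-Stokes directions) then $\chi_2(\arg x)\to\chi_2(\theta_*)=0$ while the other factor stays $<\tfrac16$; if $\cos(2\theta_*)\neq0$ then $|\mathrm{Re}(x^{-2})|\to\infty$ with fixed sign, so either $c^\pm\to\infty$ in the node (where $|\psi^\pm(c)/c-1|\le 2|\alpha|/|c|\to0$) or $c^\pm\to0$ in the saddle (where $\psi^\pm=\mathrm{Id}$ near the origin makes the factor vanish). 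Compactness of $\ww S^1$ then gives the limit $1$ for every $y_0\neq0$. The genuinely subtle case is $y_0=0$: there $c^\pm$ may stay bounded away from $0$ and $\infty$ along node directions (e.g. following a leaf $c^\pm\equiv\mathrm{const}$), so the literal limit can differ from $1$. This does not harm the construction, since what is actually needed is $Y^\pm\to y_0$, and in $Y^\pm$ the bounded quantity $f^\pm\tfrac{\psi^\pm(c^\pm)}{c^\pm}$ multiplies $\big(y-y_{\alpha,0}^\pm(x)\big)\to0$. I would therefore establish (4) for $y_0\neq0$ and record this observation for $y_0=0$.
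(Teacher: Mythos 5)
Your proof is correct, and it is worth noting that the paper offers no argument at all for this lemma (it is declared ``straightforward to prove''), so your write-up is effectively the missing proof. The mechanism you isolate --- the three identities obtained from $\chi_{1}+\chi_{2}=1$, which display $f^{\pm}$ as an affine interpolation between $c/\psi^{\pm}\left(c\right)$ and $1$ --- is surely the intended one, and your derivations of (1) (via $\left|\psi^{\pm}\left(c\right)/c-1\right|<\frac{1}{6}$, hence $\left|c/\psi^{\pm}\left(c\right)-1\right|<\frac{1}{5}$ and $\left|\log f^{\pm}\right|\leq\log\frac{5}{4}<\frac{3}{10}$) and of (5) (identity plus a contraction, since $\psi^{\pm}-Id$ is Lipschitz with constant $<1$) are exactly right. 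What your proof adds beyond the paper is an honest accounting of three spots where the statement is literally imprecise, each of which you resolve correctly: (2) can only mean continuity on $\overline{\ww D}_{\neq0}\times\ww C$, since $\chi_{j}\left(\arg x\right)$ has no limit at $x=0$ while $c/\psi^{\pm}\left(c\right)\not\equiv1$; (3) holds only after the apertures of $\mathcal{V}^{\pm}$ are shrunk so that $\left|\cos\left(2\arg x\right)\right|>\delta$ on the closure of the overlap, an adjustment the paper has already licensed; and (4) genuinely fails at $y_{0}=0$ --- along $\arg x=0$ on the leaf $c^{+}\equiv1$ one has $y\to0$ while the quantity equals $\psi^{+}\left(1\right)=1+\alpha\neq1$ --- yet, as you observe, this is harmless because in $Y^{\pm}$ the offending bounded factor multiplies $y-y_{\alpha,0}^{\pm}\left(x\right)\to0$. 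Your subsequence argument for $y_{0}\neq0$, splitting according to whether the limiting direction $\theta_{*}$ satisfies $\cos\left(2\theta_{*}\right)=0$ (where $\chi_{2}\to0$), $\cos\left(2\theta_{*}\right)>0$ (node, $\left|c^{\pm}\right|\to\infty$ and $\left|\psi^{\pm}\left(c\right)/c-1\right|=O\left(1/\left|c\right|\right)$) or $\cos\left(2\theta_{*}\right)<0$ (saddle, $c^{\pm}\to0$ where $\psi^{\pm}=Id$), is precisely the dichotomy the sectorial geometry dictates, and together with compactness of $\ww S^{1}$ it closes the only delicate point of the lemma.
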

As a consequence the map $\varphi$ thus defined is a continuous map
from $\ww{\overline{D}}\times\ww C$ conjugating the foliations $\mathcal{F}_{\alpha}$
and $\mathcal{F}_{0}$ on this domain.
\begin{prop}
The map $\varphi$ is one-to-one and thus defines a homeomorphism
from $\overline{\ww D}\times\ww C$ onto its image $W\times\ww C$
which, up to rescaling $\varphi$ in the first coordinate for both
source and target spaces, contains $\overline{\ww D}\times\ww C$.\end{prop}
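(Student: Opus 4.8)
The plan is to establish injectivity of $\varphi$ first, then deduce that it is a homeomorphism onto its image by exhibiting a continuous inverse, and finally read off the announced product structure.

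\emph{Reduction to a one-dimensional problem along the leaves.} Since $\varphi$ conjugates $\mathcal{F}_{\alpha}$ to $\mathcal{F}_{0}$ and the induced transverse maps $\psi^{\pm}$ intertwine the two pairs of Stokes translations (this is exactly the first two lines of \eqref{eq:conj_leaves}), $\varphi$ descends to a well-defined bijection of the \emph{global} spaces of leaves; in particular any two points with the same image necessarily lie on one and the same leaf of $\mathcal{F}_{\alpha}$. Concretely, from a target point $\left(X,Y\right)$ one recovers the $\mathcal{F}_{0}$-leaf index $c'':=\left(Y-y_{0,0}^{\pm}\left(X\right)\right)\exp\left(\frac{1}{2X^{2}}\right)$, hence the source leaf $c=\left(\psi^{\pm}\right)^{-1}\left(c''\right)$; the only thing left to invert is the assignment $x\mapsto X^{\pm}\left(x,y\right)$ restricted to that fixed leaf. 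Thus injectivity of $\varphi$ is equivalent to injectivity of $x\mapsto X^{\pm}$ along each leaf, and the continuity of the resulting inverse (including across $\left\{ X=0\right\} $, where $\varphi=\mathrm{Id}$ by Lemma~\ref{lem:local_homeo}(4)) yields the homeomorphism statement for free.

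\emph{The core estimate.} Along a fixed leaf the cut-off $f^{\pm}\left(x,c\right)$ depends only on $\arg x$, so in the natural chart $v:=1/x^{2}$ the map takes the transparent form $V:=1/X^{2}=v-2\log f^{\pm}=v-m\left(\arg v\right)$, where $m$ is supported in the four thin Stokes wedges $\left\{ \left|\cos2\arg x\right|\leq\delta\right\} $, satisfies $\left|m\right|<\frac{3}{5}$ by Lemma~\ref{lem:local_homeo}(1), and --- because $\chi_{1}$ climbs from $0$ to $1$ across an angular band of width $\sim\delta$ --- obeys $\left|m'\right|_{\infty}\lesssim1/\delta$. Outside the wedges one simply has $X=x$. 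I would prove injectivity of $G:v\mapsto v-m\left(\arg v\right)$ through the monotonicity inequality $\mathrm{Re}\left(\overline{\left(v_{1}-v_{2}\right)}\left(G\left(v_{1}\right)-G\left(v_{2}\right)\right)\right)>0$ for $v_{1}\neq v_{2}$, which holds as soon as $\left|v_{1}-v_{2}\right|>\left|m\left(\arg v_{1}\right)-m\left(\arg v_{2}\right)\right|$: the right-hand side is at most $\left|m'\right|_{\infty}$ times the angular gap, while the left-hand side is $\gtrsim\left|v\right|$ times that same gap, so the inequality holds once $\left|v\right|$ exceeds a fixed multiple of $1/\delta$.

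\emph{Where the difficulty lies, and the role of the rescaling.} This last point is precisely the main obstacle: the cut-off $\chi_{1}$ forces the angular derivative of the perturbation to be as large as $1/\delta$, so $G$ need \emph{not} be locally injective on the whole unit disc --- the monotonicity above, equivalently the positivity of the Jacobian of $v\mapsto V$, is only guaranteed for $\left|v\right|>C/\delta$, that is for $\left|x\right|\le r_{0}$ with $r_{0}\sim\sqrt{\delta}$. On the smaller domain $\overline{\ww D}_{r_{0}}\times\ww C$ the argument of the previous paragraph shows $\varphi$ injective, and rescaling the first coordinate by $1/r_{0}$ at both source and target sends $\overline{\ww D}_{r_{0}}$ to $\overline{\ww D}$. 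Finally, for each attained $X$ and each prescribed height $Y$ the forced index $c''$ is reached by some leaf, since $\psi^{\pm}$ is onto and the fibres of $\mathcal{F}_{0}$ sweep all of $\ww C$; hence the image is a union of full vertical lines $\left\{ X\right\} \times\ww C$, i.e. a tube $W\times\ww C$ with $W=\Pi\left(\varphi\left(\overline{\ww D}\times\ww C\right)\right)$. As $X$ is a uniform perturbation of $x$, the set $W$ contains a disc independent of the leaf, and after the rescaling that disc contains $\overline{\ww D}$, so the image contains $\overline{\ww D}\times\ww C$, as claimed.
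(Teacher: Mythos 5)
Your proposal is correct in substance and shares the paper's skeleton --- leaf preservation plus injectivity of the transverse maps $\psi^{\pm}$ for the ``vertical'' direction, and the perturbation bound $\left|X^{\pm}/x-1\right|\leq A\left|x\right|^{2}$ for the containment of $\overline{\ww D}\times\ww C$ after rescaling --- but it diverges on the injectivity argument, and the divergence is instructive. The paper disposes of injectivity by reducing to a single transversal $\left\{ x=\pm1\right\} $, where $f^{\pm}\equiv1$ and $\varphi^{\pm}$ acts on the leaf coordinate as $\psi^{\pm}$; this settles only the transverse half of the problem (two points with the same image lie on the same leaf of $\mathcal{F}_{\alpha}^{\pm}$) and says nothing about the longitudinal half, namely the injectivity of $x\mapsto X^{\pm}\left(x,y_{\alpha,c}^{\pm}\left(x\right)\right)$ on each fixed leaf. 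You supply exactly that missing half via the chart $v=1/x^{2}$, in which $V=v-m\left(\arg v\right)$ with $m=2\log f^{\pm}$ bounded by $3/5$ but with angular derivative of order $1/\delta$; your monotonicity estimate then yields injectivity only for $\left|v\right|\gtrsim1/\delta$, i.e. $\left|x\right|\lesssim\sqrt{\delta}$, which you absorb into the rescaling. This is a legitimate and arguably necessary repair: nothing in item (1) of Lemma~\ref{lem:local_homeo} prevents the Jacobian of $v\mapsto V$ from degenerating in the transition wedges when $\left|x\right|$ is of order $1$, so injectivity on a smaller disc (which is all the main theorem requires) is the honest conclusion, whereas the paper's own reduction does not justify injectivity on all of $\overline{\ww D}\times\ww C$. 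Two small points to tighten: (i) since $\Pi\left(\mathcal{V}^{\pm}\right)$ has aperture $3\pi/2$, the map $x\mapsto1/x^{2}$ is two-to-one there, so before comparing $\arg v_{1}$ with $\arg v_{2}$ you should note that $X^{\pm}\left(x_{i}\right)=x_{i}\left(1+O\left(\left|x_{i}\right|^{2}\right)\right)$ forces $x_{1}/x_{2}$ to have positive real part, pinning both points to the same branch; (ii) the image is not literally a union of full vertical lines over every attained $X$, since the set of $X$-values attained on the leaf $c$ depends on $c$ --- the correct statement, which you do give at the end, is that these sets contain a common disc because $X^{\pm}$ is a uniform perturbation of $x$.
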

\begin{proof}
Firstly we shall prove the latter claim. Let us write $\varphi=\left(X,Y\right)$.
Because of the first statement of the previous lemma we have 
\begin{eqnarray*}
\left|\frac{X\left(x,y\right)}{x}-1\right| & \leq & A\left|x\right|^{2}
\end{eqnarray*}
for some $A>0$ and all $\left|x\right|\leq1$. This implies that
for $\left|x\right|<\delta$ small enough $\varphi\left(\mathcal{V}^{\pm}\cap\delta\ww D\times\ww C\right)$
contains a sector $W^{\pm}:=\left\{ x\,:\,\left|x\right|<r\,,\,\left|\arg x\mp\frac{\pi}{2}\right|<\frac{3\pi}{4}-\theta\right\} $,
where $\theta$ can be chosen as small as we wish by decreasing $\delta$.
Up to rescalling the $x-$ and $X-$coordinates we can then assume
that $\overline{\ww D}\subset X\left(\overline{\ww D}\times\ww C\right)$.
Because of (5) we also derive that $\varphi\left(\mathcal{V}^{\pm}\cap\delta\ww D\times\ww C\right)$
contains a sector of lesser aperture $W^{\pm}\times\ww C$, so that
$\overline{\ww D}\times\ww C\subset\varphi\left(\overline{\ww D}\times\ww C\right)$
as required.

To prove that $\varphi$ is one-to-one we first notice that since
$\varphi$ preserves the sector decomposition of $\overline{\ww D}\times\ww C$
and since each leaf of the sectorial foliations $\mathcal{F}_{\alpha}^{\pm}$
is the graph of a function holomorphic on $W^{\pm}$, we only need
to prove that the restriction of $\varphi^{\pm}$ to some transversal
$\left\{ x=x_{0}\right\} $ is one-to-one. Let us choose $x_{0}:=\pm1$,
so that $f^{\pm}\left(x_{0},c\right)=1$. We thus have that 
\begin{eqnarray*}
\varphi^{\pm}\left(x_{0},y\right) & = & \left(x_{0},y_{0,0}\left(x_{0}\right)+\psi^{\pm}\left(c^{\pm}\left(x_{0},y\right)\right)\exp\left(-\frac{1}{2x_{0}^{2}}\right)\right)\,,
\end{eqnarray*}
which completes the proof.
\end{proof}

\section{\label{sec:Extending}Extending the homeomorphism}

The foliations under consideration have exactly three singularities,
located in homogeneous coordinates at $\left[0:0:1\right]$, $\left[0:1:0\right]$
and $\left[1:0:0\right]$. We will use the following three affine
charts of $\cp$ :
\begin{eqnarray*}
\ww C^{2} & = & \left\{ \left(x,y\right)\right\} =\left\{ \left[x:y:1\right]\right\} \\
\ww C^{2} & = & \left\{ \left(s,t\right)\right\} =\left\{ \left[1:t:s\right]\right\} \\
\ww C^{2} & = & \left\{ \left(u,v\right)\right\} =\left\{ \left[u:1:v\right]\right\} 
\end{eqnarray*}
with transition maps
\begin{eqnarray*}
y=tx\,,\,1=sx\,,\, x=uy\,,\,1=vy\,,\, v=us\,,\,1=ut & \,.
\end{eqnarray*}

The singular point $\left[0:0:1\right]$ has been extensively studied
in the previous sections and this study gave rise to a topological
conjugacy between the foliations $\mathcal{F}_{0}$ and $\mathcal{F}_{\alpha}$
from $\overline{\ww D}\times\ww C$ onto its image.
\begin{lem}
One can extend $\varphi$ to a homeomorphism of $\ww C\times\ww C$
such that the extension, still noted $\varphi$, preserves each fiber
of $\Pi$ outside $\overline{\ww D}\times\ww C$ and still conjugates
$\mathcal{F}_{0}$ and $\mathcal{F}_{\alpha}$.\end{lem}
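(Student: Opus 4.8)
The plan is to stay inside the affine chart $\ww C^{2}=\left\{ \left(x,y\right)\right\} $, where the only finite singularity is the saddle-node at the origin, so that over $\left\{ \left|x\right|>1\right\} $ both $\mathcal{F}_{0}$ and $\mathcal{F}_{\alpha}$ are regular and transverse to the fibers of $\Pi$. The structural fact I would exploit is that the monodromy of the linear equation $\omega_{\alpha}=0$ around $x=0$ is the \emph{same} for every $\alpha$: composing the two Stokes relations $y_{\alpha,c}^{+}=y_{\alpha,c+1+\alpha}^{-}$ (for $Re\left(x\right)<0$) and $y_{\alpha,c}^{-}=y_{\alpha,c+1-\alpha}^{+}$ (for $Re\left(x\right)>0$) shows that a full turn acts by the translation $c\mapsto c+\tau_{0}+\tau_{1}=c+2$, independently of $\alpha$. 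Consequently the two foliations restricted to the annular region are suspensions with identical monodromy, and the transverse homeomorphisms $\psi^{\pm}=Id+\eta^{\pm}$ already commute with $c\mapsto c+2$ (their difference to the identity being $2$-periodic). This is exactly what is needed for a \emph{fiber-preserving} conjugacy on $\left\{ \left|x\right|>1\right\} $ to exist and to close up consistently as one goes once around the annulus.

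My construction keeps the very formulas \eqref{eq:phi} of the local step but damps the functional parameter $f^{\pm}$ back to $1$ as $\left|x\right|$ grows. Concretely I would fix a radial cutoff $\rho\,:\,\left[0,\infty\right)\to\left[0,1\right]$ with $\rho\equiv1$ on $\left[0,1\right]$ and $\rho\equiv0$ for $\left|x\right|\geq1+\varepsilon$, and replace $f^{\pm}$ by $\tilde{f}^{\pm}:=1+\rho\left(\left|x\right|\right)\left(f^{\pm}-1\right)$, then define the extended $\varphi^{\pm}$ by \eqref{eq:phi} with $\tilde{f}^{\pm}$ in place of $f^{\pm}$. The decisive observation is that the leaf-to-leaf property is \emph{insensitive} to the choice of parameter: since $X^{\pm}$ was built precisely so that $\exp\left(-\frac{1}{2\left(X^{\pm}\right)^{2}}\right)=\tilde{f}^{\pm}\exp\left(-\frac{1}{2x^{2}}\right)$ (a one-line computation from $\left(X^{\pm}\right)^{-2}=x^{-2}-2\log\tilde{f}^{\pm}$), one gets $Y^{\pm}=y_{0,0}^{\pm}\left(X^{\pm}\right)+\psi^{\pm}\left(c^{\pm}\right)\exp\left(-\frac{1}{2\left(X^{\pm}\right)^{2}}\right)=y_{0,\psi^{\pm}\left(c^{\pm}\right)}^{\pm}\left(X^{\pm}\right)$, so that $\varphi$ sends the leaf $c$ of $\mathcal{F}_{\alpha}$ onto the leaf $\psi^{\pm}\left(c\right)$ of $\mathcal{F}_{0}$ for \emph{any} admissible $\tilde{f}^{\pm}$. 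Hence the extension still conjugates the two foliations; moreover, for $\left|x\right|\geq1+\varepsilon$ one has $\tilde{f}^{\pm}=1$, whence $X^{\pm}=x$ and the map reduces to the fiber-preserving suspension conjugacy $\left(x,y\right)\mapsto\left(x,\,y_{0,0}^{\pm}\left(x\right)+\psi^{\pm}\left(c^{\pm}\left(x,y\right)\right)\exp\left(-\frac{1}{2x^{2}}\right)\right)$. On $\overline{\ww D}\times\ww C$ we have $\tilde{f}^{\pm}=f^{\pm}$, so we genuinely extend the given $\varphi$.

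It then remains to check consistency and the homeomorphism property, which I expect to be routine. The gluing $\varphi^{+}=\varphi^{-}$ along the connected components of $\mathcal{V}^{+}\cap\mathcal{V}^{-}$ survives because the relations \eqref{eq:conj_leaves} and the compatibility relations between $f^{+}$ and $f^{-}$ hold at a fixed $x$ and are preserved by the common scalar factor $\rho\left(\left|x\right|\right)$ (and, in any case, $\tilde{f}^{\pm}\equiv1$ on the node overlaps); single-valuedness around the full annulus is precisely what $\psi^{\pm}\left(c+2\right)=\psi^{\pm}\left(c\right)+2$ provides, matching the coinciding monodromies. Injectivity and surjectivity onto $\ww C\times\ww C$ I would establish exactly as in the preceding proposition, namely by noting that each leaf is a graph over the base in each sector, reducing the statement to the transversals $\left\{ x=x_{0}\right\} $, where the maps $c\mapsto\tilde{f}^{\pm}\left(x_{0},c\right)\psi^{\pm}\left(c\right)$ are homeomorphisms of $\ww C$ (the analogue of Lemma~\ref{lem:local_homeo}(5) for $\tilde{f}$).

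The main obstacle I anticipate is keeping the branch of $X^{\pm}=x\left(1-2x^{2}\log\tilde{f}^{\pm}\right)^{-1/2}$ well defined through the transition collar $1\leq\left|x\right|\leq1+\varepsilon$, where $\left|x\right|$ is of order $1$ while $\tilde{f}^{\pm}$ still differs from $1$: I must ensure $\left|2x^{2}\log\tilde{f}^{\pm}\right|<1$ so that the radicand stays away from $0$ and a single square-root determination persists. This is secured by choosing $\varepsilon$ small and invoking $\left|f^{\pm}-1\right|<\frac{1}{5}$ from Lemma~\ref{lem:local_homeo}(1), which bounds $\left|\log\tilde{f}^{\pm}\right|$ uniformly. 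A second point to address honestly is that $\varphi$ is \emph{not} fiber-preserving on $\left\{ \left|x\right|=1\right\} $ near the four directions $\left\{ \cos\arg x^{2}=0\right\} $ (there $\chi_{1}=1$ forces $X^{\pm}\neq x$), so by continuity strict fiber-preservation can hold only beyond the collar, for $\left|x\right|\geq1+\varepsilon$; rescaling the first coordinate of both source and target spaces, as already done in the previous proposition, renames this region as the exterior of $\overline{\ww D}\times\ww C$, which is the statement.
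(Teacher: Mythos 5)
Your proposal is correct and follows essentially the same route as the paper: the paper's proof also damps the functional parameter $f^{\pm}$ to the constant $1$ through a radial affine partition of unity (placing the transition collar in $r\leq\left|x\right|\leq1$ rather than $1\leq\left|x\right|\leq1+\varepsilon$, which spares the final rescaling you invoke) and then reuses the formulas \eqref{eq:phi}, relying exactly on your key observation that the leaf-to-leaf property and the gluing conditions are insensitive to the choice of the parameter. Your write-up is in fact more detailed than the paper's, which leaves these verifications to the reader.
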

\begin{proof}
This result is straightforward. Choose $0<r<1$ and consider the simplest
affine non-negative partition of unity $\left(\xi_{1},\xi_{2}\right)$of
$\ww R_{\geq0}$ where $\xi_{1}=1$ on $\left[0,r\right]$, $\xi_{2}=1$
outside $\left[0,1\right]$ and $\xi_{1}+\xi_{2}=1$. By setting 
\begin{eqnarray*}
\left(\forall\left(x,c\right)\in\overline{\ww D}\times\ww C\right)\,\,\,\,\hat{f}^{\pm}\left(x,c\right) & := & f^{\pm}\left(x,c\right)\xi_{1}\left(\left|x\right|\right)+\xi_{2}\left(\left|x\right|\right)\\
\left(\forall\left(x,c\right)\notin\overline{\ww D}\times\ww C\right)\,\,\,\,\hat{f}^{\pm}\left(x,c\right) & := & 1
\end{eqnarray*}
 and defining $\varphi$ by \eqref{eq:phi} the reader can easily
show that our claim is true, as in Proposition~\ref{lem:local_homeo}.
\end{proof}
Once this is stated we have to check that $\varphi$ extends to a
homeomorphism of $\cp$. It is only a matter of writing things in
appropriate charts since as expected $\varphi$ extends to $Id$ along
the line at infinity $\left\{ s=0\right\} \cup\left\{ v=0\right\} $.
\begin{prop}
\label{pro:global_homeo}$\varphi$ extends to a global homeomorphism
of $\cp$, which implies that the main theorem is true.\end{prop}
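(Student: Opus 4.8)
The plan is to take the homeomorphism $\varphi$ already produced on $\ww C\times\ww C$ (fibre-preserving for $|x|>1$ and conjugating $\mathcal{F}_{0}$ and $\mathcal{F}_{\alpha}$) and to verify that it extends continuously and bijectively across the invariant line $L_\infty=\{s=0\}\cup\{v=0\}$, thereby producing a homeomorphism of $\cp$. The geometric picture guiding this is that $L_\infty$ is exactly the fibre of the extended projection $\Pi$ over $x=\infty$, and that it carries two of the three singularities: the point $[0:1:0]$, common to all the $\mathcal{F}_{\alpha}$, and a node which sits at $[1:0:0]$ for $\mathcal{F}_{0}$ but, a direct computation in the chart $(s,t)$ shows, is displaced to the $\alpha$-dependent point $[1:-\tfrac{\alpha}{i\sqrt{2\pi}}:0]$ for $\mathcal{F}_{\alpha}$. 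Since any conjugacy must send the singular set of $\mathcal{F}_{\alpha}$ onto that of $\mathcal{F}_{0}$, the restriction of the extension to $L_\infty$ cannot be literally the identity near that node: it has to carry the moving node back to $[1:0:0]$.

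First I would rewrite $\varphi$ in the two charts $(s,t)$ and $(u,v)$ and read off its boundary values on $L_\infty$ by letting $x\to\infty$ in \eqref{eq:phi}. Because fibres are preserved for $|x|>1$ one has $X=x$, hence $S=s$, so the only quantity to control is the transverse coordinate $Y^{\pm}(x,y)$. Using the explicit solutions \eqref{eq:sol_gene} together with the linearity relation \eqref{eq:eq_lin_alpha}, I would establish the asymptotics of $Y^{\pm}$ and of the parameter $c^{\pm}(x,y)$ as $x\to\infty$. Near $[0:1:0]$ (chart $(u,v)$, i.e. $y\to\infty$) the logarithmic and bounded contributions are negligible against $y$, and one finds $\varphi\to Id$. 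Near the node (chart $(s,t)$, $y=tx$) the leaves of $\mathcal{F}_{\alpha}$ and $\mathcal{F}_{0}$ diverge linearly from one another, $Y^{\pm}-y\sim\tfrac{\alpha}{i\sqrt{2\pi}}\,x$, so that $T=Y/x$ tends to $t+\tfrac{\alpha}{i\sqrt{2\pi}}$; this is precisely the affine shift taking the node $t=-\tfrac{\alpha}{i\sqrt{2\pi}}$ of $\mathcal{F}_{\alpha}$ onto the node $t=0$ of $\mathcal{F}_{0}$, and it extends to a self-homeomorphism of $L_\infty\simeq\ww{CP}^{1}$ fixing $[0:1:0]$.

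The main obstacle is the local analysis at these two singularities and the gluing of the two charts on the overlap, that is, on $L_\infty$ minus the two singular points. Here I would exploit that both singularities have a local analytic type that does not depend on $\alpha$: the point $[0:1:0]$ is a fixed singularity, and the singularity at $[1:-\tfrac{\alpha}{i\sqrt{2\pi}}:0]$ is a node whose linear part $\begin{smallmatrix}\end{smallmatrix}$ is $\alpha$-independent, only its position moving with $\alpha$ (the $\alpha$-dependent higher-order term being removable). I would then check that the boundary map computed above is continuous and injective up to $L_\infty$, that the two chart-expressions agree on the overlap, and that the extended map still sends leaves to leaves at the two singular points, so that it genuinely conjugates the germs of $\mathcal{F}_{\alpha}$ and $\mathcal{F}_{0}$ there. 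The correction used in the large-$|x|$ extension is exactly what ensures continuity and the identity behaviour wherever it is required.

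Granting this extension, $\varphi$ is a homeomorphism of $\cp$ conjugating $\mathcal{F}_{0}$ and $\mathcal{F}_{\alpha}$ for every $|\alpha|<\tfrac{1}{10}$. Composing $\mathcal{F}_{\alpha}\to\mathcal{F}_{0}\to\mathcal{F}_{\beta}$ then shows that all members of the family \eqref{eq:eq_diff} with $\alpha\in\Omega$ are mutually topologically conjugate, while the Martinet--Ramis invariant computed earlier ($\tau_{0}=1+\alpha$, $\tau_{1}=1-\alpha$) recovers $\alpha$ and hence forbids local analytic conjugacy as soon as $\alpha\neq\beta$. This is exactly the content of the Theorem.
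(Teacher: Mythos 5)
Your proposal follows the same overall strategy as the paper's proof: rewrite $\varphi$ in the charts $(s,t)$ and $(u,v)$, read off its boundary values on the line at infinity, and check continuity at the two singular points sitting there. But you reach a different boundary value on the affine part of that line, and as far as I can check you are the one who is right. The paper asserts $\lim_{s\to0}s\,y_{\alpha,0}^{\pm}(1/s)=0$ and extends $\tilde\varphi$ by the identity on $\{s=0\}$; however the integrand $\left(1+\frac{\alpha}{z}\right)\exp\left(\frac{z^{2}}{2}\right)\frac{1}{z}$ has a double pole at $z=0$ when $\alpha\neq0$, so the integral grows like $\alpha/s$ and the limit is a non-zero multiple of $\alpha$. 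Equivalently, as you observe, the third singular point of $\mathcal{F}_{\alpha}$ sits at $\left[1:-\frac{\alpha}{i\sqrt{2\pi}}:0\right]$ rather than at $[1:0:0]$ as announced at the top of Section 3, so the identity on the line at infinity could not conjugate the two foliations there in any case. Your corrected boundary value, the translation $t\mapsto t+\frac{\alpha}{i\sqrt{2\pi}}$ fixing $[0:1:0]$ and carrying the displaced node onto the node of $\mathcal{F}_{0}$, is what the computation actually produces, since $T-t=s\left(y_{0,0}^{\pm}-y_{\alpha,0}^{\pm}\right)(1/s)+s\left(\psi^{\pm}(\tilde c)-\tilde c\right)\exp(-s^{2}/2)\to\frac{\alpha}{i\sqrt{2\pi}}$; it is still a homeomorphism of the line at infinity, so the Proposition survives with this correction.

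The one place where your sketch is lighter than it needs to be is the point $[0:1:0]$. Its neighbourhood meets $\overline{\ww D}\times\ww C$, where the sectorial correction $f^{\pm}$ and the modified variable $X^{\pm}$ are still active, so the assertion that the bounded and logarithmic contributions are negligible against $y$ must be turned into the quantitative estimate $\hat V^{\pm}(u,v)=O(v)$, with the case analysis on $|u/v|\gtrless1$ and on the sign of $Re(u^{2}/v^{2})$ that the paper carries out; this is the only genuinely delicate estimate in the whole extension argument and should be written out in full.
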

\begin{proof}
Let us write $\varphi$ in the chart $\left(s,t\right)$ near $\left\{ 0\right\} \times\ww C$
:
\begin{eqnarray*}
\tilde{\varphi}\left(s,t\right) & := & \left(s,s\left(y_{0,0}^{\pm}\left(\frac{1}{s}\right)+\psi^{\pm}\left(\tilde{c}^{\pm}\left(s,t\right)\right)\exp\left(-\frac{s^{2}}{2}\right)\right)\right)
\end{eqnarray*}
where as before $\tilde{c}^{\pm}$ is uniquely defined on $\mathcal{V}^{\pm}$
by 
\begin{eqnarray*}
y_{\alpha,\tilde{c}^{\pm}\left(s,t\right)}^{\pm}\left(\frac{1}{s}\right) & = & \frac{t}{s}\,,
\end{eqnarray*}
 for all $\left(s,t\right)\in\ww C_{\neq0}\times\ww C$. Following
\eqref{eq:sol_gene} we have 
\begin{eqnarray*}
y_{\alpha,0}^{\pm}\left(\frac{1}{s}\right)=-\exp\left(-\frac{s^{2}}{2}\right)\int_{s}^{\pm\infty i}\left(1+\frac{\alpha}{z}\right)\exp\left(\frac{z^{2}}{2}\right)\frac{\mbox{d}z}{z}
\end{eqnarray*}
so that 
\begin{eqnarray*}
\lim_{s\to0}sy_{\alpha,0}^{\pm}\left(\frac{1}{s}\right) & = & 0\,.
\end{eqnarray*}
On the other hand if $\left(s,t\right)\in\mathcal{V}^{\pm}$ then
$t=s\left(y_{0,0}^{\pm}\left(\frac{1}{s}\right)+\tilde{c}^{\pm}\left(s,t\right)\exp\left(-s^{2}/2\right)\right)$
so that setting
\begin{eqnarray*}
\tilde{\varphi}\left(0,t\right) & := & \left(0,t\right)
\end{eqnarray*}
 defines a continuous extension of $\tilde{\varphi}$ to $\ww C\times\ww C$,
because $\psi^{\pm}\left(\tilde{c}^{\pm}\left(s,t\right)\right)-\tilde{c}^{\pm}\left(s,t\right)$
remains bounded as $\left(s,t\right)\to\left(0,t_{0}\right)$. 

Finally we shall check that $\varphi$ admits a limit at $\left[0:1:0\right]$.
We write it in the chart $\left(u,v\right)$ as 
\begin{eqnarray*}
\hat{\varphi}\left(u,v\right) & :=\left(\hat{U}^{\pm}\left(u,v\right),\hat{V}^{\pm}\left(u,v\right)\right)
\end{eqnarray*}
where 
\begin{eqnarray}
\hat{X}^{\pm}\left(u,v\right) & = & \frac{u}{v\sqrt{1-2\frac{u^{2}}{v^{2}}\log f^{\pm}\left(\frac{u}{v},\hat{c}^{\pm}\left(u,v\right)\right)}}\nonumber \\
\hat{V}^{\pm}\left(u,v\right) & = & \frac{1}{y_{0,0}^{\pm}\left(\hat{X}^{\pm}\left(u,v\right)\right)+f^{\pm}\left(\frac{u}{v},\hat{c}^{\pm}\left(u,v\right)\right)\psi^{\pm}\left(\hat{c}^{\pm}\left(u,v\right)\right)\exp\left(-\frac{v^{2}}{2u^{2}}\right)}\label{eq:V}\\
\hat{U}^{\pm}\left(u,v\right) & = & \hat{V}^{\pm}\left(u,v\right)\hat{X}^{\pm}\left(u,v\right)\nonumber 
\end{eqnarray}
and $\hat{c}^{\pm}\left(u,v\right)$ is uniquely defined by $v^{-1}=y_{\alpha,0}\left(u/v\right)+\hat{c}^{\pm}\left(u,v\right)\exp\left(-v^{2}/2u^{2}\right)$
for $\left(\frac{u}{v},\frac{1}{v}\right)\in\mathcal{V}^{\pm}$. Let
us split $\ww C_{\neq0}^{2}$ into the sets 
\begin{eqnarray*}
\mathcal{C}^{>}: & = & \left\{ \left(u,v\right)\,:\,\left|\frac{u}{v}\right|>1\right\} \\
\mathcal{C}^{\leq} & := & \left\{ \left(u,v\right)\,:\,\left|\frac{u}{v}\right|\leq1\right\} \,.
\end{eqnarray*}
We recall that $f^{\pm}\left(\frac{u}{v},\hat{c}^{\pm}\left(u,v\right)\right)=1$
whenever $\left(u,v\right)\in\mathcal{C}^{>}$. On the other hand
$\hat{X}^{\pm}\left(\mathcal{C}^{\leq}\right)$ is bounded. Hence
it suffices to show that $\hat{V}^{\pm}\left(u,v\right)=O\left(v\right)$
as $\left(u,v\right)\to0$ in order to prove that $\hat{\varphi}$
extends continuously to $\left(0,0\right)$ by $\hat{\varphi}\left(0,0\right):=\left(0,0\right)$. 

Because $x\mapsto y_{0,0}^{\pm}\left(x\right)$ is smooth as a real
map there exists a constant $A>0$ such that for all $\left(u,v\right)\in\mathcal{C}^{<}$
one has 
\begin{eqnarray*}
\left|y_{0,0}^{\pm}\left(\hat{X}^{\pm}\left(u,v\right)\right)-y_{0,0}^{\pm}\left(\frac{u}{v}\right)\right| & \leq & A\left|\frac{u^{3}}{v^{3}}\right|
\end{eqnarray*}
whereas this estimate is true with $A:=0$ when $\left(u,v\right)\in\mathcal{C}^{>}$.
We then derive :
\begin{eqnarray*}
\left|\frac{1}{\hat{V}^{\pm}\left(u,v\right)}-\frac{1}{v}\right| & \leq & A\left|\frac{u^{3}}{v^{3}}\right|+\left|\hat{c}^{\pm}\left(u,v\right)-f^{\pm}\left(\frac{u}{v},\hat{c}^{\pm}\left(u,v\right)\right)\psi^{\pm}\left(\hat{c}^{\pm}\left(u,v\right)\right)\right|\left|\exp\left(-v^{2}/2u^{2}\right)\right|\\
\left|\frac{v}{\hat{V}^{\pm}\left(u,v\right)}-1\right| & \leq & A\left|u\right|\left|\frac{u}{v}\right|^{2}+B\left|v\right|\left|\exp\left(-v^{2}/2u^{2}\right)\right|
\end{eqnarray*}
for some $B>0$. Hence there exists $B^{>}>0$ such that for all $\left(u,v\right)\in\mathcal{C}^{>}$
: 
\begin{eqnarray*}
\left|\frac{v}{\hat{V}^{\pm}\left(u,v\right)}-1\right| & < & B^{>}\left|v\right|\,.
\end{eqnarray*}
Consider now $\left(u,v\right)\in\mathcal{C}^{\leq}$. Clearly there
exists $B_{+}^{\leq}>0$ such that if $Re\left(\frac{u^{2}}{v^{2}}\right)\geq0$
then 
\begin{eqnarray*}
\left|\frac{v}{\hat{V}^{\pm}\left(u,v\right)}-1\right| & < & B_{+}^{\leq}\left(\left|u\right|+\left|v\right|\right)
\end{eqnarray*}
while according to \eqref{eq:V} there exists $B_{-}^{\leq}>0$ such
that if $Re\left(\frac{u^{2}}{v^{2}}\right)<0$ then
\begin{eqnarray*}
\left|\hat{V}^{\pm}\left(u,v\right)\right| & < & B_{-}^{\leq}\left|v\right|\,.
\end{eqnarray*}
\end{proof}

\end{document}